\documentclass[12pt]{amsart}
\usepackage{amsmath, amsfonts, amssymb, latexsym, epsfig, amsthm}
\usepackage[usenames,dvipsnames]{xcolor}
\usepackage{graphicx,pgf,tikz}
\usetikzlibrary{decorations.markings}

\usepackage[all]{xy}
\usepackage{wrapfig}

\usepackage{ytableau}

\setlength{\oddsidemargin}{0in}
\setlength{\evensidemargin}{0in}
\setlength{\marginparwidth}{0in}
\setlength{\marginparsep}{0in}
\setlength{\marginparpush}{0in}
\setlength{\topmargin}{0in}
\setlength{\headheight}{0pt}
\setlength{\headsep}{0pt}
\setlength{\footskip}{.3in}
\setlength{\textheight}{9.2in}
\setlength{\textwidth}{6.5in}
\setlength{\parskip}{4pt}

\newtheorem{theorem}{Theorem}[section]
\newtheorem{proposition}[theorem]{Proposition}
\newtheorem{lemma}[theorem]{Lemma}

\newtheorem{claim}[theorem]{Claim}
\newtheorem*{claim*}{Claim}
\newtheorem{corollary}[theorem]{Corollary}
\newtheorem{Main Conjecture}[theorem]{Main Conjecture}

\theoremstyle{remark}

\newtheorem{example}[theorem]{Example}

\theoremstyle{plain}


\hyphenation{tab-leau tab-leaux ge-nome ge-nomes}


\newcommand{\excise}[1]{}

\begin{document}
\pagestyle{plain}
\mbox{}
\title{Partition identities and quiver representations}

\author{Rich\'ard Rim\'anyi}
\address{Department of Mathematics\\
The University of North Carolina at Chapel Hill \\
CB \#3250, Phillips Hall \\
Chapel Hill, NC 27599}
\email{rimanyi@email.unc.edu}

\author{Anna Weigandt}
\author{Alexander Yong}
\address{Department of Mathematics\\
University of Illinois at Urbana-Champaign\\
Urbana, IL 61801}
\email{weigndt2@uiuc.edu, ayong@uiuc.edu}

\date{August 5, 2016}

\maketitle

\begin{abstract}
We present a particular connection between classical partition combinatorics and the theory of quiver representations. Specifically, we give a bijective proof of an analogue of A.~L.~Cauchy's Durfee square identity to multipartitions.  We then use this result to give a new proof of M.~Reineke's identity
in the case of quivers $Q$ of Dynkin type $A$ of arbitrary orientation. Our identity is stated in terms of the lacing diagrams of S.~Abeasis--A.~Del Fra, which parameterize orbits of the representation space of $Q$ for  a fixed dimension vector.
\end{abstract}

\tableofcontents

\section{Introduction}

The main goal of this paper is to establish a specific connection between classical partition combinatorics and the theory of quiver representations. 

\subsection{Lace and (multi)partition combinatorics}
 
   A {\bf lacing diagram} \cite{abeasis1980degenerations} $\mathcal L$ is a graph. The vertices
are arranged in $n$ columns
labeled $1,2,\ldots,n$ (left to right). 
The edges between adjacent columns form a partial matching.
A {\bf strand} is a connected component of~$\mathcal L$.
 
\begin{center}
\begin{tikzpicture}[x=1em,y=1em]

\node[] at (-12,1.5){\begin{tikzpicture}[x=1em,y=1em]
\filldraw [color=black,fill=black,thick](0,1)circle(.1);
\filldraw [color=black,fill=black,thick](0,0)circle(.1);

\filldraw [color=black,fill=black,thick](1,3)circle(.1);
\filldraw [color=black,fill=black,thick](1,2)circle(.1);
\filldraw [color=black,fill=black,thick](1,1)circle(.1);
\filldraw [color=black,fill=black,thick](1,0)circle(.1);

\filldraw [color=black,fill=black,thick](2,2)circle(.1);
\filldraw [color=black,fill=black,thick](2,1)circle(.1);
\filldraw [color=black,fill=black,thick](2,0)circle(.1);

\filldraw [color=black,fill=black,thick](3,1)circle(.1);
\filldraw [color=black,fill=black,thick](3,0)circle(.1);

\draw[color=black,fill=black, thick](0,0)--(1,0);
\draw[color=black,fill=black, thick](0,1)--(1,1);
\draw[color=black,fill=black, thick](1,2)--(2,0);
\draw[color=black,fill=black, thick](2,0)--(3,0);
\draw[color=black,fill=black, thick](1,3)--(2,1);
\end{tikzpicture}
};

\node[] at (-6,1.5){\begin{tikzpicture}[x=1em,y=1em]
\filldraw [color=black,fill=black,thick](0,1)circle(.1);
\filldraw [color=black,fill=black,thick](0,0)circle(.1);

\filldraw [color=black,fill=black,thick](1,3)circle(.1);
\filldraw [color=black,fill=black,thick](1,2)circle(.1);
\filldraw [color=black,fill=black,thick](1,1)circle(.1);
\filldraw [color=black,fill=black,thick](1,0)circle(.1);

\filldraw [color=black,fill=black,thick](2,2)circle(.1);
\filldraw [color=black,fill=black,thick](2,1)circle(.1);
\filldraw [color=black,fill=black,thick](2,0)circle(.1);

\filldraw [color=black,fill=black,thick](3,1)circle(.1);
\filldraw [color=black,fill=black,thick](3,0)circle(.1);

\draw[color=black,fill=black, thick](0,0)--(1,0);
\draw[color=black,fill=black, thick](0,1)--(1,3);
\draw[color=black,fill=black, thick](1,1)--(2,0);
\draw[color=black,fill=black, thick](2,0)--(3,0);
\draw[color=black,fill=black, thick](1,2)--(2,2);
\end{tikzpicture}
};

\node[] at (0,1.5){\begin{tikzpicture}[x=1em,y=1em]
\filldraw [color=black,fill=black,thick](0,1)circle(.1);
\filldraw [color=black,fill=black,thick](0,0)circle(.1);

\filldraw [color=black,fill=black,thick](1,3)circle(.1);
\filldraw [color=black,fill=black,thick](1,2)circle(.1);
\filldraw [color=black,fill=black,thick](1,1)circle(.1);
\filldraw [color=black,fill=black,thick](1,0)circle(.1);

\filldraw [color=black,fill=black,thick](2,2)circle(.1);
\filldraw [color=black,fill=black,thick](2,1)circle(.1);
\filldraw [color=black,fill=black,thick](2,0)circle(.1);

\filldraw [color=black,fill=black,thick](3,1)circle(.1);
\filldraw [color=black,fill=black,thick](3,0)circle(.1);

\draw[color=black,fill=black, thick](0,0)--(1,3);
\draw[color=black,fill=black, thick](0,1)--(1,2);
\draw[color=black,fill=black, thick](1,1)--(2,0);
\draw[color=black,fill=black, thick](2,0)--(3,1);
\draw[color=black,fill=black, thick](1,0)--(2,2);
\end{tikzpicture}
};
\end{tikzpicture}
\end{center}

Two lacing diagrams are {\bf equivalent} if they only
differ by reordering of vertices within columns.  For example, the lacing diagrams pictured above are all equivalent. Let $\eta=[\mathcal L]$ denote the equivalence class of lacing diagrams.

Pick any $\mathcal L\in \eta$ and let ${\mathbf d}(k)$ be the number of vertices in the $k$th column of $\mathcal L$.  Define \[{\bf dim}(\mathcal \eta):=( \mathbf d(1),\ldots, \mathbf  d(n)).\]  Let
\begin{equation}
\label{eqn:sik}
s_{i}^k(\eta)=\#\{\text{strands from column $i$ to column $k-1$}\}, \text{\ and}
\end{equation}
\begin{equation}
\label{eqn:tjk}
t_{j}^k(\eta)=\#\{\text{strands starting at column $j$ using a vertex of column $k$}\}.
\end{equation}

 Fix permutations ${\bf w}=(w^{(1)},\ldots, w^{(n)})$, where $w^{(i)}\in {\mathfrak S}_{i}$ and $w^{(i)}(i)=i$. The partition combinatorics behind Theorem~\ref{thm:main} below suggests the {\bf Durfee statistic}:
\begin{equation}
\label{eqn:durfeeStatistic}
r_{\mathbf w}(\eta)=\sum_{k=2}^n\sum_{1\leq i<j\leq k} s_{w^{(k)}(i)}^k(\eta) t_{w^{(k)}(j)}^k(\eta).
\end{equation} 
We will later attach geometric meaning to $r_{\mathbf w}(\eta)$ (see Theorem~\ref{prop:codim}).

 Let
\[(q)_k=(1-q)(1-q^2)\ldots (1-q^k).\]
L. Euler introduced the following identity of generating series:
\[\frac{1}{(q)_k}=\sum_{r=0}^{\infty} p_{r,k}q^r,\]
where $p_{r,k}$ is the number of {\bf integer partitions} $ \lambda=(\lambda_1\geq \lambda_2\geq \cdots\geq \lambda_{\ell(\lambda)}>0)$ of {\bf size}
$|\lambda|:=\sum {\lambda_i}$ equal to $r$ and parts of size at most $k$.
Therefore it follows that
\[\prod_{k=1}^n\frac{1}{(q)_{\mathbf d(k)}}=\sum_{r=0}^\infty p_{r,\mathbf d} q^r\]
where $p_{r,\mathbf d}$ is the number of sequences of {\bf multipartitions} $(\lambda^{(1)},\ldots,\lambda^{(n)})$ where \[\sum_{i=1}^n|\lambda^{(i)}|=r\] and $\lambda^{(i)}$ has parts of size at most $\mathbf d(i)$.

\begin{theorem}[Quiver Durfee Identity]
\label{thm:main}
\begin{equation}
\label{eqn:maineqn}
\prod_{k=1}^{n} \frac{1}{(q)_{ {\mathbf d}(k)}}=\sum_{\eta}q^{r_{\mathbf w}(\eta)} \prod_{k=1}^n \frac{1}{(q)_{t_k^k(\eta)}}
\prod_{i=1}^{k-1}{t_i^k(\eta)+s_i^k(\eta)\brack s_i^k(\eta)}_q
, 
\end{equation}
where the sum is taken over $\eta$ such that ${\bf dim}(\eta)= (\mathbf d(1),\ldots,\mathbf d(n))$.
\end{theorem}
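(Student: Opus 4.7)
The plan is to give a bijective proof. Interpret the LHS as the generating function $\sum_{(\lambda^{(1)},\ldots,\lambda^{(n)})} q^{\sum_k |\lambda^{(k)}|}$ over sequences of multipartitions with $\lambda^{(k)}$ having parts of size at most $\mathbf{d}(k)$. Interpret the RHS, for each equivalence class $\eta$ with $\mathbf{dim}(\eta)=\mathbf{d}$, as a weighted sum over decorations of $\eta$: a partition $\nu^{(k)}$ with parts $\leq t_k^k(\eta)$ for each $k$, and a partition $\rho^{(i,k)}$ fitting inside a box of dimensions $s_i^k(\eta)\times t_i^k(\eta)$ for each $i<k$; the total weight is $q^{r_{\mathbf w}(\eta)+\sum_k |\nu^{(k)}|+\sum_{i<k}|\rho^{(i,k)}|}$.

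The key technical input I would extract and prove first is a \emph{rectangular Durfee lemma}: for any $T_1,\ldots,T_{k-1},D\geq 0$ and any $w\in\mathfrak{S}_k$ with $w(k)=k$,
\begin{equation}
\label{eqn:propRD}
\frac{1}{(q)_D}=\sum_{\substack{0\leq t_i\leq T_i,\,i<k \\ t_1+\cdots+t_k=D}} q^{\sum_{1\leq i<j\leq k}(T_{w(i)}-t_{w(i)})\,t_{w(j)}}\,\frac{1}{(q)_{t_k}}\,\prod_{i=1}^{k-1}{T_i \brack t_i}_q.
\end{equation}
Specializing to $k=2$ and $T_1=D$ recovers the classical Cauchy--Durfee identity. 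I would establish \eqref{eqn:propRD} bijectively by a generalized Durfee dissection: given a partition $\lambda$ with parts $\leq D$, its Young diagram is carved into $k-1$ nested rectangular blocks peeled off in the order prescribed by $w$, whose total area equals $\sum_{i<j\leq k}(T_{w(i)}-t_{w(i)})\,t_{w(j)}$; the leftover pieces sit inside rectangular boxes of shape $t_i\times(T_i-t_i)$ and account for $\prod_i {T_i \brack t_i}_q$; finally, a residual tail partition with parts $\leq t_k$ contributes $\frac{1}{(q)_{t_k}}$.

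With \eqref{eqn:propRD} in hand, the theorem follows by iteration. Starting from $\prod_{k=1}^n \frac{1}{(q)_{\mathbf{d}(k)}}$, I would apply \eqref{eqn:propRD} to the $k$-th factor for $k=1,2,\ldots,n$ in order, plugging in $T_i:=t_i^{k-1}$ (a summation variable produced at step $k-1$) and $D:=\mathbf{d}(k)$. Using the identity $s_i^k=T_i-t_i^k$ together with the symmetry ${T_i \brack t_i^k}_q={s_i^k+t_i^k \brack s_i^k}_q$, the output of step $k$ contributes exactly the $k$-th factor of the RHS of \eqref{eqn:maineqn} along with the $k$-th summand of $r_{\mathbf w}(\eta)$. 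After all $n$ iterations, the resulting multi-sum ranges precisely over equivalence classes $\eta$ with $\mathbf{dim}(\eta)=\mathbf{d}$, since specifying $(t_i^k)_{i\leq k\leq n}$ subject to $0\leq t_i^k\leq t_i^{k-1}$ and $\sum_{i\leq k}t_i^k=\mathbf{d}(k)$ is the same as specifying the strand multiplicities of a lacing diagram.

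The main obstacle will be the explicit construction of the generalized Durfee dissection underlying \eqref{eqn:propRD} and the verification of its bijectivity, accommodating the order prescribed by the permutation $w$; the condition $w(k)=k$ is essential, because the position with unbounded parts must sit at the end of the dissection. A byproduct of the argument, not apparent from the definition of $r_{\mathbf w}(\eta)$, is that the RHS of \eqref{eqn:maineqn} is independent of the choice of permutations $\mathbf{w}$.
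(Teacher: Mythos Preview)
Your proposal is correct and follows essentially the same route as the paper. The paper builds a single weight-preserving bijection $\Psi:T\to S$ whose $k$-th component is exactly the generalized Durfee dissection you describe (their Figure~1), and its inverse $\Phi$ is defined via nested Durfee rectangles $D(\lambda^{(k)},\cdot)$ with parameters $T_i=t_i^{k-1}$ coming from the previous column; your ``rectangular Durfee lemma'' \eqref{eqn:propRD} is precisely this $k$-th component isolated as a standalone identity, and your iteration over $k$ is the composition of these componentwise bijections. The only difference is organizational: you package the per-column step as a lemma before iterating, whereas the paper presents the global bijection directly and then verifies invertibility; the underlying combinatorics (nested Durfee rectangles peeled in the order dictated by $w^{(k)}$, with the condition $w^{(k)}(k)=k$ ensuring the unbounded tail sits last) is identical.
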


Here
\[{k\brack j}_q=\frac{[k]_q!}{[j]_q![k-j]_q!}=\frac{(q)_k}{(q)_j(q)_{k-j}}\] 
is the {\bf Gaussian binomial coefficient}, where 
$[i]_q:=1+q+q^2+\cdots+q^{i-1}$.   In fact, ${k\brack j}_q$ is the generating
series for partitions whose associated Ferrers shape is contained
in a $j\times (k-j)$ rectangle. That is
\[{k\brack j}_q=\sum_{\lambda\subseteq j\times (k-j)}
q^{|\lambda|}.\]

\begin{example}[Relationship to classical Durfee square identity]
\label{exa:classical}
Let $n=2$ and set $\mathbf d(1)=\mathbf d(2)=k$.  Then $w^{(1)}=1$ and $w^{(2)}=1 2$ (throughout we will express permutations in one line notation) by the assumption $w^{(k)}(k)=k$.   Equivalence classes of lacing diagrams are determined by the number of strands which start and end at the first vertex.  If there are $j$ such strands, then there are $k-j$ strands connecting the first and second vertex.  Then there must be exactly $k-(k-j)=j$ strands starting and ending at the second vertex.

\begin{center}
\begin{tikzpicture}[x=1em,y=1em]
\filldraw [color=black,fill=black,thick](0,3)circle(.1);
\filldraw [color=black,fill=black,thick](0,2)circle(.1);
\filldraw [color=black,fill=black,thick](0,1)circle(.1);
\filldraw [color=black,fill=black,thick](0,0)circle(.1);

\filldraw [color=black,fill=black,thick](1,3)circle(.1);
\filldraw [color=black,fill=black,thick](1,2)circle(.1);
\filldraw [color=black,fill=black,thick](1,1)circle(.1);
\filldraw [color=black,fill=black,thick](1,0)circle(.1);

\draw[color=black,fill=black, thick](0,0)--(1,0);
\draw[color=black,fill=black, thick](0,1)--(1,1);
\draw[color=black,fill=black, thick](0,2)--(1,2);

\node[] at (3.5,1){$\bigg\} \quad  k-j$};
\node[] at (2.5,3){$\} \quad j$};
\end{tikzpicture}
\end{center}
  So if $\eta$ has $j$ strands of type $[1,1]$, then \[s_1^2(\eta)=j, \ \ t_1^1(\eta)=j, \ \ t_1^2(\eta)=k-j, \text{\ \ and $t_2^2(\eta)=j$.}\] 
Thus 
\[r_{\mathbf w}(\eta)=s_1^2(\eta)t_2^2(\eta)=j^2.\]  
Hence (\ref{eqn:maineqn}) states 
\[\frac{1}{(q)_k}\frac{1}{(q)_k}=\sum_{j=0}^k q^{j^2} \frac{1}{(q)_k} \frac{1}{(q)_{j}}{(k-j)+j\brack j}_q.\]
Multiplying both sides by $(q)_{k}$ gives
the ``Durfee square identity'' due to A-L. Cauchy:
\begin{equation}
\label{eqn:Gauss}
\frac{1}{(q)_k}=\sum_{j= 0}^k q^{j^2} 
{k\brack j}_q \frac{1}{(q)_j}.
\end{equation}

The {\bf Durfee square} $D(\lambda)$ of $\lambda$
is the largest $j\times j$ square that fits inside $\lambda$. Let ${\mathcal P}_k$ be the set of partitions of width at most $k$. By decomposing
$\lambda$ using $D(\lambda)$ one obtains a
bijection
${\mathcal P}_k\xrightarrow{\sim} \bigcup_{j\geq 0} {\mathcal D}\times
{\mathcal A}_j\times {\mathcal P}_j$
where ${\mathcal D}$ is the singleton set consisting of the $j\times j$ square
and
${\mathcal A}_j$ is the set of partitions contained in a $j\times (k-j)$ rectangle.  This gives a textbook bijective proof of (\ref{eqn:Gauss}).\qed
\end{example}

There has been earlier work generalizing the Durfee square
identity to multipartitions. In particular, we point the reader to
the definition of  \emph{Durfee dissections} of A. Schilling \cite{schilling1998supernomial}, which has some similarities in shape to
the identity of Theorem~\ref{thm:main}.    Here, each \emph{Durfee rectangle} has at least as many columns as rows, which differs from our definition.  We also note the resemblance to the \emph{Durfee systems} of P. Bouwknegt \cite{bouwknegt2002multipartitions}.     Also see the references to
\emph{loc.~cit.} for other work on generalized Durfee square
identities.   One main point of difference is that these identities do not
concern lacing diagrams. 

\begin{example}
\label{example:n3identity}
Let $n=3$ and $\mathbf d=(1,2,1)$ and $\mathbf w=(1,12,123)$.  Then \[r_{\mathbf w}=(s_1^2t_2^2)+(s_1^3t_2^3+s_1^3t_3^3+s_2^3t_3^3)\]
and
\[ \prod_{k=1}^3 \frac{1}{(q)_{t_k^k}}
\prod_{i=1}^{k-1}{t_i^k+s_i^k\brack s_i^k}_q=\left(\frac{1}{(q)_{t_1^1}}\right)\left(\frac{1}{(q)_{t_2^2}} {t_1^2+s_1^2\brack s_1^2}_q \right) \left (\frac{1}{(q)_{t_3^3}}{t_1^3+s_1^3\brack s_1^3}_q {t_2^3+s_2^3\brack s_2^3}_q \right).\]
The table below gives the equivalence classes for $\mathbf d=(1,2,1)$ and their corresponding terms on the right hand side of (\ref{eqn:maineqn}).
\begin{center}
\begin{tabular}{|c|c|c|c|} \hline
$[\mathcal L]$& $(s_j^k)$ & $(t_j^k)$&$q^{r_w}\left(\frac{1}{(q)_{t_1^1}}\right)\left(\frac{1}{(q)_{t_2^2}} {t_1^2+s_1^2\brack s_1^2}_q \right) \left (\frac{1}{(q)_{t_3^3}}{t_1^3+s_1^3\brack s_1^3}_q {t_2^3+s_2^3\brack s_2^3}_q \right)$ \\\hline
$\left[\begin{tikzpicture}[x=1em,y=1em]
\filldraw [color=black,fill=black,thick](0,0)circle(.1);
\filldraw [color=black,fill=black,thick](1,1)circle(.1);
\filldraw [color=black,fill=black,thick](1,0)circle(.1);
\filldraw [color=black,fill=black,thick](2,0)circle(.1);
\end{tikzpicture}\right]$
&
$\begin{array}{cc|c}
2 & 1& j/k\\ \hline
&1&2\\
2&0&3\\
\end{array}$
&
$\begin{array}{ccc|c}
3 & 2 & 1& j/k\\ \hline
&&1&1\\
&2&0&2\\
1&0&0&3\\
\end{array}$
&
$q^4\left(\frac{1}{(q)_{1}}\right)\left(\frac{1}{(q)_{2}}\right) \left (\frac{1}{(q)_{1}}  \right)=\frac{q^4}{(1-q)^3(1-q^2)}$
\\ \hline
$\left[\begin{tikzpicture}[x=1em,y=1em]
\filldraw [color=black,fill=black,thick](0,0)circle(.1);
\filldraw [color=black,fill=black,thick](1,1)circle(.1);
\filldraw [color=black,fill=black,thick](1,0)circle(.1);
\filldraw [color=black,fill=black,thick](2,0)circle(.1);

\draw[color=black,fill=black, thick](0,0)--(1,0);
\end{tikzpicture}\right]$
&
$\begin{array}{cc|c}
 2 & 1& j/k\\ \hline
&0&2\\
1&1&3\\
\end{array}$
&
$\begin{array}{ccc|c}
3 & 2 & 1& j/k\\ \hline
&&1&1\\
&1&1&2\\
1&0&0&3\\
\end{array}$
&
$q^2\left(\frac{1}{(q)_{1}}\right)\left(\frac{1}{(q)_{1}}  \right) \left (\frac{1}{(q)_{1}} \right)=\frac{q^2}{(1-q)^3}$
\\ \hline

$\left[\begin{tikzpicture}[x=1em,y=1em]
\filldraw [color=black,fill=black,thick](0,0)circle(.1);
\filldraw [color=black,fill=black,thick](1,1)circle(.1);
\filldraw [color=black,fill=black,thick](1,0)circle(.1);
\filldraw [color=black,fill=black,thick](2,0)circle(.1);

\draw[color=black,fill=black, thick](1,0)--(2,0);
\end{tikzpicture}\right]$
&
$\begin{array}{cc|c}
2 & 1& j/k\\ \hline
&1&2\\
1&0&3\\
\end{array}$
&
$\begin{array}{ccc|c}
3 & 2 & 1& j/k\\ \hline
&&1&1\\
&2&0&2\\
0&1&0&3\\
\end{array}$
&
$q^2\left(\frac{1}{(q)_{1}}\right)\left(\frac{1}{(q)_{2}} \right) \left ( {2\brack 1}_q \right)=\frac{q^2}{(1-q)^3}$
\\ \hline
$\left[\begin{tikzpicture}[x=1em,y=1em]
\filldraw [color=black,fill=black,thick](0,0)circle(.1);
\filldraw [color=black,fill=black,thick](1,1)circle(.1);
\filldraw [color=black,fill=black,thick](1,0)circle(.1);
\filldraw [color=black,fill=black,thick](2,0)circle(.1);

\draw[color=black,fill=black, thick](0,0)--(1,0);
\draw[color=black,fill=black, thick](1,1)--(2,0);
\end{tikzpicture}\right]$
&
$\begin{array}{cc|c}
 2 & 1& j/k\\ \hline
&0&2\\
0&1&3\\
\end{array}$
&
$\begin{array}{ccc|c}
3 & 2 & 1& j/k\\ \hline
&&1&1\\
&1&1&2\\
0&1&0&3\\
\end{array}$
&
$q\left(\frac{1}{(q)_{1}}\right)\left(\frac{1}{(q)_{1}} \right) =\frac{q}{(1-q)^2}$
\\ \hline
$\left[\begin{tikzpicture}[x=1em,y=1em]
\filldraw [color=black,fill=black,thick](0,0)circle(.1);
\filldraw [color=black,fill=black,thick](1,1)circle(.1);
\filldraw [color=black,fill=black,thick](1,0)circle(.1);
\filldraw [color=black,fill=black,thick](2,0)circle(.1);

\draw[color=black,fill=black, thick](0,0)--(2,0);
\end{tikzpicture}\right]$
&
$\begin{array}{cc|c}
 2 & 1& j/k\\ \hline
&0&2\\
1&0&3\\
\end{array}$
&
$\begin{array}{ccc|c}
3 & 2 & 1& j/k\\ \hline
&&1&1\\
&1&1&2\\
0&0&1&3\\
\end{array}$
&
$\left(\frac{1}{(q)_{1}}\right)\left(\frac{1}{(q)_{1}}  \right) =\frac{1}{(1-q)^2}$
\\ \hline
\end{tabular}
\end{center}
We then verify,
\begin{align*}
{\rm RHS}&=\frac{q^4}{(1-q)^3(1-q^2)}+\frac{q^2}{(1-q)^3}+\frac{q^2}{(1-q)^3}+\frac{q}{(1-q)^2}+\frac{1}{(1-q)^2}\\
&=\frac{1}{(1-q)^3(1-q^2)}(q^4+q^2(1-q^2)+q^2(1-q^2)+q(1-q)(1-q^2)+(1-q)(1-q^2))\\
&=\frac{1}{(1-q)^3(1-q^2)}\\
&=\frac{1}{(q)_1(q)_2(q)_1}\\
&={\rm LHS}.
\end{align*}

Notice that (\ref{eqn:Gauss}) says
\[\frac{1}{(q)_1}=1+\frac{q}{(q)_1}\]
and 
\[\frac{1}{(q)_2}=1+\frac{q}{(q)_1}{2\brack 1}_q+\frac{q^4}{(q)_2}\]
Thus
\begin{align*}
\frac{1}{(q)_1}\frac{1}{(q)_2}\frac{1}{(q)_1}&=\left(\frac{1}{(q)_1}\right)\left(1+\frac{q}{(q)_1}\right)\left(1+\frac{q}{(q)_1}{2\brack 1}_q+\frac{q^4}{(q)_2}\right)\\
&=\frac{1}{1-q}+\frac{q}{(1-q)^2}+\frac{q(1+q)}
{(1-q)^2}+\frac{q^2(1+q)}{(1-q)^3}+\frac{q^4}{(1-q)^2(1-q^2)}+\frac{q^5}{(1-q)^3(1-q^2)}
\end{align*}
Theorem~\ref{thm:main} does not appear to be an \emph{a priori} consequence of (\ref{eqn:Gauss}).  Instead, we will give a \emph{bijective} proof of Theorem~\ref{thm:main}
in the spirit of the one given for (\ref{eqn:Gauss})
in Example~\ref{exa:classical}.
\qed
\end{example}

A strand is of {\bf type} $[i,j]$ if it starts in column $i$ and ends in column $j$.  The number of strands of type $[i,j]$ is invariant on $[\mathcal L]$.  Therefore we let 
\begin{equation}
\label{eqn:mij}
m_{[i,j]}(\eta)= \#\{\text{ strands of type $[i,j]$ in any $\mathcal L$ of $\eta=[\mathcal L]$}\}.
\end{equation}

\begin{corollary}
\label{cor:cancel}
\begin{equation}
\label{eqn:cancel}
\prod_{i=1}^ n \frac{1}{(q)_{ {\mathbf d}(i)}}=\sum_{\eta}q^{r_{\mathbf w}(\eta)}\prod_{1\leq i\leq j\leq n} \frac{1}{(q)_{m_{[i,j]}(\eta)}}.
\end{equation}
\end{corollary}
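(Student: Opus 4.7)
The plan is to deduce Corollary~\ref{cor:cancel} as a purely algebraic rewriting of the right-hand side of (\ref{eqn:maineqn}); no new bijection is needed. It suffices to establish, for every lacing equivalence class $\eta$, the identity
\[
\prod_{k=1}^{n}\frac{1}{(q)_{t_k^k(\eta)}}\prod_{i=1}^{k-1}{t_i^k(\eta)+s_i^k(\eta)\brack s_i^k(\eta)}_q
=\prod_{1\leq i\leq j\leq n}\frac{1}{(q)_{m_{[i,j]}(\eta)}},
\]
after which Corollary~\ref{cor:cancel} follows from Theorem~\ref{thm:main} term-by-term.

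The argument rests on two elementary identifications of strand counts. From (\ref{eqn:sik}) and (\ref{eqn:mij}), a strand from column $i$ to column $k-1$ is by definition a strand of type $[i,k-1]$, so $s_i^k(\eta)=m_{[i,k-1]}(\eta)$. From (\ref{eqn:tjk}), a strand starting at column $i$ that uses a vertex of column $k-1$ either terminates at column $k-1$ or continues through column $k$, giving $t_i^{k-1}(\eta)=t_i^k(\eta)+s_i^k(\eta)$. Specialized at the last column, the latter yields $t_i^n(\eta)=m_{[i,n]}(\eta)$ for every $i\leq n$.

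Using these relations and the definition of the Gaussian binomial coefficient, I expand
\[
{t_i^k(\eta)+s_i^k(\eta)\brack s_i^k(\eta)}_q
=\frac{(q)_{t_i^{k-1}(\eta)}}{(q)_{t_i^k(\eta)}\,(q)_{m_{[i,k-1]}(\eta)}},
\]
and substitute into the inner product of (\ref{eqn:maineqn}). Splitting the three kinds of factors apart yields
\[
\prod_{k=1}^n\frac{1}{(q)_{t_k^k(\eta)}}\cdot\prod_{k=2}^n\prod_{i=1}^{k-1}\frac{(q)_{t_i^{k-1}(\eta)}}{(q)_{t_i^k(\eta)}}\cdot\prod_{k=2}^n\prod_{i=1}^{k-1}\frac{1}{(q)_{m_{[i,k-1]}(\eta)}}.
\]
For each fixed $i$ the middle double product telescopes to $(q)_{t_i^i(\eta)}/(q)_{t_i^n(\eta)}$; combining this with the diagonal factors $\prod_k 1/(q)_{t_k^k(\eta)}$ collapses the entire $t$-contribution to $\prod_{i=1}^n 1/(q)_{t_i^n(\eta)}=\prod_{i=1}^n 1/(q)_{m_{[i,n]}(\eta)}$. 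Reindexing the third product via $j=k-1$ produces $\prod_{1\leq i\leq j\leq n-1}1/(q)_{m_{[i,j]}(\eta)}$, and the two assembled pieces merge to $\prod_{1\leq i\leq j\leq n}1/(q)_{m_{[i,j]}(\eta)}$ as required.

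Since the derivation is bookkeeping built on two direct observations about strand counts, there is no substantive obstacle; the only care needed is to line up the telescoping indices correctly and to confirm that the boundary case $k=n$ supplies precisely the $m_{[i,n]}$ factors that are not already present in the reindexed third product.
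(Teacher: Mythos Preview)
Your proof is correct and follows essentially the same approach as the paper: both deduce the corollary from Theorem~\ref{thm:main} by the two strand-count identities $s_i^k(\eta)=m_{[i,k-1]}(\eta)$ and $t_i^{k-1}(\eta)=t_i^k(\eta)+s_i^k(\eta)$, then expand the Gaussian binomials and telescope. The only cosmetic difference is in how the telescoping is organized---you fix $i$ and telescope over $k$, while the paper separates numerators and denominators into double products and reindexes---but the substance is identical.
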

\begin{proof}
From the definitions,
\begin{equation}
t_i^k(\eta)+s_i^k(\eta)=t_i^{k-1}(\eta).
\end{equation} Furthermore, 
\[s_i^k(\eta)=m_{[i,k-1]}(\eta) \text{\  and $t_i^n(\eta)=m_{[i,n]}(\eta)$.}\]  
Thus,
\begin{align*}
\prod_{k=1}^n \frac{1}{(q)_{t_k^k(\eta)}} \prod_{i=1}^{k-1}{t_i^k(\eta)+s_i^k(\eta)\brack s_i^k(\eta)}_q
&=\prod_{k=1}^n \frac{1}{(q)_{t_k^k(\eta)}} \prod_{i=1}^{k-1}\frac{(q)_{t_i^k(\eta)+s_i^k(\eta)}}{ (q)_{t_i^k(\eta)}(q)_{s_i^k(\eta)}}\\
&=\prod_{k=1}^n \frac{1}{(q)_{t_k^k(\eta)}} \prod_{i=1}^{k-1}\frac{(q)_{t_i^{k-1}(\eta)}}{ (q)_{t_i^k(\eta)}(q)_{s_i^k(\eta)}}\\
&=\left(\prod_{k=1}^n\frac{1}{(q)_{t_k^k(\eta)}}\prod_{i=1}^{k-1}\frac{(q)_{t_i^{k-1}(\eta)}}{(q)_{t_i^k}(\eta)}\right) \left(\prod_{k=1}^n\prod_{i=1}^{k-1}\frac{1}{(q)_{s_i^k(\eta)}}\right)\\
&=\left(\prod_{k=1}^n\prod_{i=1}^k\frac{1}{(q)_{t_i^k(\eta)}}\right)\left(\prod_{k=2}^n\prod_{i=1}^{k-1}(q)_{t_i^{k-1}(\eta)}\right) \left(\prod_{k=1}^n\prod_{i=1}^{k-1}\frac{1}{(q)_{s_i^k(\eta)}}\right)\\
&=\left(\prod_{k=1}^n\prod_{i=1}^k\frac{1}{(q)_{t_i^k(\eta)}}\right)\left(\prod_{k=1}^{n-1}\prod_{i=1}^{k}(q)_{t_i^{k}(\eta)}\right) \left(\prod_{k=1}^n\prod_{i=1}^{k-1}\frac{1}{(q)_{s_i^k(\eta)}}\right)\\
&=\left(\prod_{i=1}^n\frac{1}{(q)_{t_i^n(\eta)}}\right)\left(\prod_{k=1}^n\prod_{i=1}^{k-1}\frac{1}{(q)_{s_i^k(\eta)}}\right)\\
&=\left(\prod_{i=1}^n\frac{1}{(q)_{m_{[i,n]}(\eta)}}\right)\left(\prod_{k=1}^n\prod_{i=1}^{k-1}\frac{1}{(q)_{m_{[i,k-1]}(\eta)}}\right)\\
&=\prod_{1\leq i\leq j\leq n}\frac{1}{(q)_{m_{[i,j]}(\eta)}}. \qedhere
\end{align*}
\end{proof}

\subsection{Quiver Representations}
M.~Reineke (cf.~\cite[(10)]{rimanyi2013cohomological}) proved an identity \emph{very} close to (\ref{eqn:cancel})
 that is the motivation of this work. His identity
is phrased in terms of quiver representations; we briefly recall the background essentials.   One source concerning quiver representations is \cite{brion2008representations}.

Let $Q$ be a {\bf quiver}, a directed graph with vertex set $Q_0$ and arrows $Q_1$. For $a\in Q_1$ let $h(a)$ be the head of the arrow and $t(a)$ its tail.  Throughout we will work over $\mathbb C$.
A {\bf representation} ${\sf V}$ of $Q$ assigns a vector space $V_x$ to each $x\in Q_0$ as well as a 
linear transformation $V_a:V_{t(a)}\rightarrow V_{h(a)}$ for each arrow $a\in Q_1$.   
Each representation ${\sf V}$ of $Q$ has an associated {\bf dimension vector} 
\[{\mathbf d}:Q_0\rightarrow \mathbb Z_{\geq 0},
\text{\ where $ \mathbf {\mathbf d}(x)={\rm dim} V_x$.}\] 

A ${\bf morphism}$ ${\sf T}:{\sf V}\rightarrow {\sf W}$ is a collection of linear maps $(T_x:V_x\rightarrow W_x)_{x\in Q_0}$ such that 
  \[T_{h(a)}V_a=W_a T_{t(a)}
\text{\ for every arrow $a\in Q_1$.}\]
    Write ${\rm Hom}(\sf V,\sf W)$ for the space of morphisms from $\sf V$ to $\sf W$. 
    Given representations ${\sf V}$ and ${\sf W}$, we may form the {\bf direct sum} ${\sf V}\oplus {\sf W}$ by pointwise taking direct sums of vector spaces and morphisms.  If ${\sf V}\cong {\sf V'}\oplus {\sf V''}$ implies ${\sf V'}$ or ${\sf V''}$ is trivial, then ${\sf V}$ is {\bf indecomposable}.  
 If ${\sf V}$ is a finite dimensional representation of $Q$ then the Krull-Schmidt decomposition is 
 \begin{equation}
 \label{eqn:Krull}
   {\sf V}\cong \bigoplus_{i=1}^m {\sf V_i}^{\oplus m_i},
   \end{equation}
     where the ${\sf V_i}$ are pairwise non-isomorphic indecomposable representations.  This decomposition and the multiplicities $m_i$ are unique up to reordering.

Let ${\sf Mat}(m,n)$ be the space of $m \times n$ matrices.
The {\bf representation space} is 
\[{\sf Rep}_Q( \mathbf d):=\bigoplus_{a\in Q_1}{\sf Mat}({\mathbf d}(h(a)),{\mathbf d}(t(a)).\]  ${\sf Rep}_Q(\mathbf d)$ is isomorphic to affine space $\mathbb A^N$ where $N=\sum_{a\in Q_1}\mathbf d(h(a))\mathbf d(t(a))$.  Points of ${\sf Rep}_Q(\mathbf d)$ parameterize $\mathbf d$ dimensional representations of $Q$.
Let 
\[{\sf GL}_Q( \mathbf d):=\prod_{x\in Q_0} {\sf GL}({ \mathbf d}(x)).\]  ${\sf GL}_Q(\mathbf d)$ acts on ${\sf Rep}_Q(\mathbf d)$ by base change.  Orbits of this action are in bijection with isomorphism classes of $\mathbf d$ dimensional representations.

For the remainder of the paper, assume $Q$ is a type $A_n$ quiver, i.e. the underlying graph of $Q$ is a  path with $n$ vertices.  Then ${\sf GL}_Q(\mathbf d)$ acts on ${\sf Rep}_Q(\mathbf d)$ with finitely many orbits.  In particular, these orbits are indexed by equivalence classes of $\mathbf d$-dimensional lacing diagrams, as follows.  

Identify the vertices of $Q$ with the numbers $1,\ldots, n$ from left to right. Let \[\Phi^+=\{I=[i,j]:1\leq i\leq j\leq n\}\] be the set of intervals in $Q$.   Label the arrows of $Q$ from left to right $a_1$ through $a_{n-1}$.  In this case,
P.~Gabriel's theorem states that isomorphism classes of indecomposables biject with elements of $\Phi^+$ in the following way.  Define ${\sf V}_I$ with vector spaces
\[({\sf V}_I)_k=\begin{cases}\mathbb C &\text{if $k\in I$}\\ 0 &\text{otherwise}\end{cases}\] and morphisms
\[({\sf V}_I)_{a}=\begin{cases}\rm{id}:\mathbb C\rightarrow \mathbb C &\text{if $h(a),t(a)\in I$}\\ 0 &\text{otherwise.}\end{cases}\]
Then by (\ref{eqn:Krull}),
\[{\sf V}\cong \bigoplus_{I\in \Phi^+}{ \sf V}_{I}^{\oplus m_{I}},\]
where $m_{[i,j]}$ is the multiplicity of ${\sf V}_{I}$ in $\sf V$.  We record this data in a lacing diagram $\mathcal L$ which has $m_{[i,j]}$ strands starting in column $i$ and ending in column $j$.

Let $\mathbf d={\bf dim}(\eta)$.  Write 
\[\mathcal O_\eta:={\sf GL}_Q(\mathbf d)\cdot V_\eta\subset {\sf Rep}_Q(\mathbf d)\] 
where \[V_\eta:= \bigoplus_{I\in \Phi^+}{ \sf V}_{I}^{\oplus m_{I}}.\]  Write ${\rm codim}_{\mathbb C}(\eta)$ for the (complex) codimension of $\mathcal O_\eta$ in $ {\sf Rep}_Q(\mathbf d)$.

\begin{corollary}[M.~Reineke's identity for type $A_n$ quivers]
\label{eqn:Reineke}
 For a fixed dimension vector $\mathbf d$:
\[ \prod_{i=1}^n\frac{1}{(q)_{{\mathbf d}(i)}}=\sum_\eta q^{{\rm codim}_\mathbb C\eta}\prod_{I\in \Phi^+} \frac{1}{(q)_{m_{I}(\eta)}},\]
where the sum is taken over $\eta$ so that ${\bf dim}(\eta)=\mathbf d$.
\end{corollary}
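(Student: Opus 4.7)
The plan is direct: apply Corollary~\ref{cor:cancel} and then reinterpret the exponent $r_{\mathbf w}(\eta)$ geometrically. Corollary~\ref{cor:cancel} already gives
\[\prod_{i=1}^n \frac{1}{(q)_{\mathbf d(i)}} = \sum_\eta q^{r_{\mathbf w}(\eta)} \prod_{1 \leq i \leq j \leq n} \frac{1}{(q)_{m_{[i,j]}(\eta)}},\]
which differs from Reineke's identity only in the exponent of $q$, since $\Phi^+ = \{[i,j] : 1 \leq i \leq j \leq n\}$ and $m_I(\eta)$ is the multiplicity of $\sf V_I$ in $\sf V_\eta$.  Thus it suffices to show that, for each orientation of the type $A_n$ quiver $Q$, there exists a choice of $\mathbf w = (w^{(1)},\ldots,w^{(n)})$ with $w^{(k)} \in \mathfrak{S}_k$ and $w^{(k)}(k) = k$ such that $r_{\mathbf w}(\eta) = \operatorname{codim}_{\mathbb C}(\mathcal O_\eta)$ for every $\eta$ with $\mathbf{dim}(\eta)=\mathbf d$.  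This is the statement of Theorem~\ref{prop:codim}, promised earlier in the excerpt.

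To prove this codimension formula, I would start from the orbit--stabilizer principle together with the standard identity
\[\operatorname{codim}_{\mathbb C}(\mathcal O_\eta) = \dim \operatorname{Ext}^1_Q({\sf V}_\eta, {\sf V}_\eta) = \dim \operatorname{Hom}({\sf V}_\eta, {\sf V}_\eta) - \langle {\sf V}_\eta, {\sf V}_\eta \rangle_Q,\]
where the Euler form is $\langle {\sf V}, {\sf V}\rangle_Q = \sum_{x \in Q_0} \mathbf d(x)^2 - \sum_{a \in Q_1} \mathbf d(t(a))\mathbf d(h(a))$.  Plugging in the Krull--Schmidt decomposition ${\sf V}_\eta = \bigoplus_I {\sf V}_I^{\oplus m_I(\eta)}$ turns the right-hand side into a bilinear expression in the multiplicities $m_I(\eta)$, with coefficients determined by $\dim\operatorname{Hom}({\sf V}_I,{\sf V}_J)$ for pairs $I,J \in \Phi^+$.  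For type $A$ quivers these Hom-dimensions are either $0$ or $1$, with the nonvanishing condition dictated by both the nesting pattern of the intervals and the orientations of the arrows between them.

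The core step is then to reorganize this bilinear sum column by column and recognize the result as $r_{\mathbf w}(\eta)$ for an appropriate $\mathbf w$.  My expectation is that $w^{(k)}$ encodes the orientations of the arrows to the left of vertex $k$: transposing two indices in $w^{(k)}$ swaps the roles of $s^k_{\bullet}$ and $t^k_{\bullet}$ in a contribution $s^k_{w^{(k)}(i)}\, t^k_{w^{(k)}(j)}$ to (\ref{eqn:durfeeStatistic}), which is exactly what is needed to switch between the two orientation-dependent shapes of $\dim\operatorname{Hom}({\sf V}_I,{\sf V}_J)$.  The main obstacle will be the combinatorial bookkeeping: carefully matching each term of $\dim\operatorname{Hom}({\sf V}_\eta,{\sf V}_\eta) - \langle {\sf V}_\eta,{\sf V}_\eta\rangle_Q$ to a unique summand of $r_{\mathbf w}(\eta)$ under this prescription for $\mathbf w$, and verifying that no cross-cancellations are required.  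Once Theorem~\ref{prop:codim} is in hand, substitution into (\ref{eqn:cancel}) immediately yields the stated corollary.
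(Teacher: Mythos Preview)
Your reduction to Corollary~\ref{cor:cancel} plus Theorem~\ref{prop:codim} is exactly the paper's strategy, and your plan for the latter via $\operatorname{codim}=\dim\operatorname{Ext}^1$ together with bilinearity in the $m_I$ is also what the paper does. The organizational difference is that the paper does not compute $\dim\operatorname{Hom}$ at all: it invokes a total order on $\Phi^+$ (due to Reineke) for which $\operatorname{Hom}({\sf V}_I,{\sf V}_J)=0$ and $\operatorname{Ext}^1({\sf V}_J,{\sf V}_I)=0$ whenever $I<J$, so that $\dim\operatorname{Ext}^1({\sf V}_I,{\sf V}_J)=-\chi_Q({\sf V}_I,{\sf V}_J)$ on that half, and then characterizes the pairs with negative Euler form by an explicit list of three interval configurations (their set {\tt ConditionStrands}). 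The paper also supplies a concrete recursive definition of $\mathbf w_Q$---with $w_Q^{(k)}$ obtained from $w_Q^{(k-1)}$ by either the natural inclusion or by first multiplying by $w_0^{(k-1)}$, according to whether $a_{k-2}$ and $a_{k-1}$ agree in orientation---and proves, by an induction on $k$ tracking that recursion, that the pairs contributing to $r_{\mathbf w_Q}$ (their set {\tt BoxStrands}) coincide with {\tt ConditionStrands}. This is precisely the ``bookkeeping'' you anticipated; your intuition that $w^{(k)}$ encodes the orientation pattern to the left of vertex $k$ is correct, and the paper's recursion is the mechanism that makes it precise and drives the inductive matching.
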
 

M.~Reineke's identity holds more generally for
all $ADE$ Dynkin types. It should be possible to
treat the other cases in a similar manner, although we do not do so here.

Reineke's identities may be naturally phrased as identities among quantum dilogarithm power series in a non-commutative ring. In this language the identities are closely related to cluster algebras (see e.g., work of V.~V.~Fock--A.~B.~Goncharov \cite{fock} and references therein), wall crossing phenomena (see e.g.,
the paper \cite{davison} of B.~Davison--S.~Meinhardt as well as the references therein), and Donaldson-Thomas invariants and Cohomological Hall Algebras (see, e.g., the work of M.~Kontsevich--Y.~Soibelman \cite{kontsevich}). This paper is intended to be an initial step towards understanding the rich combinatorics encoded by advanced dilogarithm identities, such as B.~Keller's identities \cite{keller}.

We now explain our proof of Corollary~\ref{eqn:Reineke}
as a special case of Corollary~\ref{cor:cancel} where ${\mathbf w}$ is
determined by $Q$. 
We define permutations $w_Q^{(i)}\in \mathfrak S_i$ as follows.   Let $w_Q^{(1)}=1$ and $w_Q^{(2)}=1 2$.  For $i\geq 3$ let $\iota$ be the natural inclusion from $\mathfrak S_{i-1}$ to $\mathfrak S_{i}$ and let $w_0^{(i-1)}$ denote the longest permutation in $\mathfrak S_{i-1}$.  
Then we set
\[w_Q^{(i)}= \begin{cases}
\iota(w_Q^{(i-1)}) \text{\ \ if $a_{i-2}$ and $a_{i-1}$ point in the same direction }\\ 
\iota( w_Q^{(i-1)}w_0^{(i-1)}) \text{\ \  if $a_{i-2}$ and $a_{i-1}$ point in opposite directions.} 

\end{cases}\]
Write ${\mathbf w_Q}=(w_Q^{(1)},\ldots, w_Q^{(n)})$.

\begin{example}
Let $Q$ be the quiver pictured below.
\begin{center}
\begin{tikzpicture}[x=.75cm,y=.75cm]
\filldraw [color=black,fill=black,thick](0,0)circle(.1);
\filldraw [color=black,fill=black,thick](1,0)circle(.1);
\filldraw [color=black,fill=black,thick](2,0)circle(.1);
\filldraw [color=black,fill=black,thick](3,0)circle(.1);
\filldraw [color=black,fill=black,thick](4,0)circle(.1);
\filldraw [color=black,fill=black,thick](5,0)circle(.1);
\draw[ decoration={markings, mark=at position 0.6 with {\arrow[scale=2]{>}}}, postaction={decorate}](4,0) -- (5,0);
\draw[ decoration={markings, mark=at position 0.6 with {\arrow[scale=2]{<}}}, postaction={decorate}](3,0) -- (4,0);
\draw[ decoration={markings, mark=at position 0.6 with {\arrow[scale=2]{<}}}, postaction={decorate}](2,0) -- (3,0);
\draw[ decoration={markings, mark=at position 0.6 with {\arrow[scale=2]{>}}}, postaction={decorate}](1,0) -- (2,0);
\draw[ decoration={markings, mark=at position 0.6 with {\arrow[scale=2]{>}}}, postaction={decorate}](0,0) -- (1,0);
\node[] at (0,-.5){$1$};
\node[] at (1,-.5){$2$};
\node[] at (2,-.5){$3$};
\node[] at (3,-.5){$4$};
\node[] at (4,-.5){$5$};
\node[] at (5,-.5){$6$};
\node[] at (.5,.5){$a_1$};
\node[] at (1.5,.5){$a_2$};
\node[] at (2.5,.5){$a_3$};
\node[] at (3.5,.5){$a_4$};
\node[] at (4.5,.5){$a_5$};
\end{tikzpicture}
\end{center}

Then $Q$ has associated permutations 
$\mathbf w_Q=(1,12,123,3214,32145,541236)$.\qed
\end{example}

With this, it remains to show that the
Durfee statistic computes codimension:
\begin{theorem}
\label{prop:codim}
\[r_{\mathbf w_Q}(\eta)={\rm codim}_\mathbb C(\mathcal O_\eta). \]
\end{theorem}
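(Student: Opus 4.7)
The plan is to compute $\mathrm{codim}_\mathbb{C}(\mathcal O_\eta)$ homologically and match the resulting expression to $r_{\mathbf w_Q}(\eta)$ via a coefficient-by-coefficient combinatorial analysis.

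First, I would invoke the standard fact for Dynkin quivers that
\[
\mathrm{codim}_\mathbb{C}(\mathcal O_\eta) = \dim \mathrm{Ext}^1(V_\eta, V_\eta),
\]
which follows from the dimension identity $\mathrm{codim} = \dim \mathrm{Rep}_Q(\mathbf d) - \dim \mathrm{GL}_Q(\mathbf d) + \dim \mathrm{End}(V_\eta)$ together with the Euler form $\langle V_\eta, V_\eta \rangle = \dim \mathrm{End}(V_\eta) - \dim \mathrm{Ext}^1(V_\eta, V_\eta)$ (higher $\mathrm{Ext}$ vanishing for type $A$). Krull--Schmidt then yields
\[
\mathrm{codim}_\mathbb{C}(\mathcal O_\eta) = \sum_{I, J \in \Phi^+} m_I(\eta)\, m_J(\eta)\, \dim \mathrm{Ext}^1(V_I, V_J),
\]
and for type $A_n$ a direct Hom/Euler computation gives $\dim \mathrm{Ext}^1(V_I, V_J) \in \{0, 1\}$.

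Next, I would associate to each pair $(V_I, V_J)$ with $\dim \mathrm{Ext}^1(V_I, V_J) = 1$ a unique ``collision column'' $k \in \{2, \ldots, n\}$: namely the unique $k$ such that one of the two strands ends at column $k-1$ while the other uses column $k$, the role of each strand being dictated by the orientation of $a_{k-1}$. After summing over the far endpoint of the strand that uses column $k$, the codimension rewrites as
\[
\mathrm{codim}_\mathbb{C}(\mathcal O_\eta) = \sum_{k=2}^n \sum_{(\alpha, \beta) \in X_k} s^k_\alpha(\eta)\, t^k_\beta(\eta),
\]
where $X_k \subseteq \{1, \ldots, k\}^2$ is a subset of size $\binom{k}{2}$ determined by the orientations of $a_1, \ldots, a_{k-1}$.

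The remaining claim, which is the heart of the proof, is the equality
\[
X_k = \{\, (w_Q^{(k)}(i), w_Q^{(k)}(j)) : 1 \le i < j \le k\,\}.
\]
I would prove this by induction on $k$ using the recursive definition of $w_Q^{(k)}$. The base case $k = 2$ gives $X_2 = \{(1, 2)\}$, matching $w_Q^{(2)} = 12$. For the inductive step, adding column $k$ introduces new pairs involving $k$ and may re-order some previous pairs among $\{1, \ldots, k-1\}$: when $a_{k-2}$ and $a_{k-1}$ point in the same direction the previous ordering is preserved (matching the inclusion $\iota$), and when they point oppositely the previous ordering on $\{1, \ldots, k-1\}$ is inverted (matching the extra factor $w_0^{(k-1)}$). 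The main obstacle is this inductive combinatorial verification, which requires tracking how an $\mathrm{Ext}^1$-contribution from a pair of intervals that may span several arrows (for example $(V_{[1,3]}, V_{[2,2]})$ in a zig-zag quiver) is detected at exactly the column dictated by the orientation pattern encoded in $w_Q^{(k)}$.
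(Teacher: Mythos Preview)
Your proposal is correct and follows essentially the same route as the paper. The paper packages your ``pairs with $\dim\mathrm{Ext}^1=1$'' into an explicit set $\mathtt{ConditionStrands}$ (three types of interval pairs $(I,J)$ determined by boundary-arrow orientations), packages the terms of $r_{\mathbf w_Q}(\eta)$ into a set $\mathtt{BoxStrands}=\{([w^{(k)}(i),k-1],[w^{(k)}(j),\ell]):i<j\le k\le\ell\}$, and then proves $\mathtt{BoxStrands}=\mathtt{ConditionStrands}$ by exactly your induction on $k$, splitting on whether $a_{k-2}$ and $a_{k-1}$ point the same way (so $w_Q^{(k)}=\iota(w_Q^{(k-1)})$) or oppositely (so $w_Q^{(k)}=\iota(w_Q^{(k-1)}w_0^{(k-1)})$). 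One minor correction: the ``role'' assignment for a contributing pair at column $k$ is governed by the orientations of \emph{both} boundary arrows $a_{x-1}$ and $a_{k-1}$ (not just $a_{k-1}$), which is precisely why the paper's type (II)/(III) dichotomy and your inverted/uninverted inductive step appear.
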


We arrive at Theorem~\ref{prop:codim} by connecting $r_{\mathbf w_Q}(\eta)$ to an earlier positive combinatorial formula for ${\rm codim}_\mathbb C(\mathcal O_\eta)$.

\section{Proof of Theorem~\ref{thm:main}}

Recall the left hand side of (\ref{eqn:maineqn}) is the generating series
for an $n$-tuple of partitions, i.e., 
\[S=\{\boldsymbol \lambda =(\lambda^{(k)})_{1\leq k\leq n}:\lambda^{(k)} \text{ is a partition having parts of size at most }  {\mathbf d}(k)\}\]
with respect to the weight:
 \[{\tt wt}_S(\boldsymbol\lambda)=\sum_{k=1}^n |\lambda^{(k)}|.\]
Consider the one element set
  \[R(\eta)=\{\boldsymbol \mu=(\mu_{i,j}^k):\mu_{i,j}^k \text{ is a } s_{w^{(k)}(i)}^k(\eta)\times t_{w^{(k)}(j)}^k(\eta) \text{ rectangle}, 1\leq i <j\leq k\leq n\},\] 
consisting of a list of rectangles depending on $i$, $j$, and $k$. Then $r_{\mathbf w}(\eta)$ is the total number of boxes in this list of rectangles.

For $i <k $, let $P_{i}^k(\eta)$ be the set of partitions which fit inside of an $s_i^k(\eta)\times t_{i}^k(\eta)$ box.  Also let $P_k^k(\eta)$ be the set of partitions which have parts of size at most $t_k^k(\eta)$.  Let \[P(\eta)=\{\boldsymbol \nu=(\nu_i^k):\nu_i^k\in P_{w^{(k)}(i)}^k(\eta), 1\leq i\leq k \leq n\}.\]  Set 
\[T({\eta})=R(\eta)\times P(\eta).\]  
Finally, we let 
\[T=\bigcup_{\eta} T(\eta),\] 
with the union taken over all lace equivalence classes $\eta$ of dimension $ \mathbf d$. 

The right hand side of (\ref{eqn:maineqn}) is the generating series 
for $T$, with respect to the weight
that assigns
$(\boldsymbol\mu,\boldsymbol\nu)\in T$ to 
\[{\tt wt}_T(\boldsymbol \mu,\boldsymbol\nu)=\sum_{1\leq i<j<k\leq n} |\mu_{i,j}^k|+\sum_{1\leq i\leq k\leq n} |\nu_i^k|.\]

  Define a map $\Psi:T\to S$ by ``gluing'' the partitions of $T$
as indicated in Figure~\ref{figure:firstglue}, for $1\leq k\leq n$.

Thus, Theorem~\ref{thm:main} follows from:
\begin{theorem}
\label{thm:bij}
$\Psi:T\to S$
is a weight-preserving bijection, i.e.,
${\tt wt}_T(\boldsymbol\mu,\boldsymbol\nu)
={\tt wt}_S(\Psi(\boldsymbol\mu,\boldsymbol\nu))$.
\end{theorem}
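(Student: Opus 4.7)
The plan is to describe the gluing $\Psi$ column by column and invert it via an iterated Durfee-style decomposition of each $\lambda^{(k)}$. Fix $k$ and abbreviate $\tilde s_i := s^k_{w^{(k)}(i)}(\eta)$ and $\tilde t_i := t^k_{w^{(k)}(i)}(\eta)$ for $1 \le i \le k$, so that $\sum_{i=1}^k \tilde t_i = \mathbf d(k)$. The map $\Psi$ builds $\lambda^{(k)}$ as a stack of $k-1$ horizontal \emph{slabs} atop a \emph{tail}: slab $i$ has height $\tilde s_i$, and its $r$-th row has width $\nu_i^k[r] + \tilde t_{i+1} + \cdots + \tilde t_k$, where the segment of width $\tilde t_j$ inside slab $i$ is precisely the rectangle $\mu_{i,j}^k$; the tail is $\nu_k^k$, with parts at most $\tilde t_k$.

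I would first check that $\Psi$ lands in $S$ and preserves the weight. Within each slab row widths decrease since $\nu_i^k$ is a partition; across the boundary between slabs $i$ and $i+1$ the last row of slab $i$ has width $\ge \tilde t_{i+1} + \cdots + \tilde t_k$, while the first row of slab $i+1$ has width $\le \tilde t_{i+1} + \cdots + \tilde t_k$, and the same comparison handles the passage to the tail. The top row of slab $1$ has width at most $\tilde t_1 + \cdots + \tilde t_k = \mathbf d(k)$. Weight preservation is immediate from $|\lambda^{(k)}| = \sum_{i=1}^k |\nu_i^k| + \sum_{1 \le i < j \le k} \tilde s_i \tilde t_j$; summing over $k$ gives ${\tt wt}_S(\Psi(\boldsymbol\mu,\boldsymbol\nu)) = {\tt wt}_T(\boldsymbol\mu,\boldsymbol\nu)$.

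To invert, I would induct on $k$. The base case $k=1$ is trivial: $t^1_1 = \mathbf d(1)$ forces $\nu_1^1 = \lambda^{(1)}$. For $k \ge 2$, assume $t^{k-1}_j$ is already known for all $j \le k-1$, and set $\tilde T_i := t^{k-1}_{w^{(k)}(i)}$, $C_i := \tilde T_1 + \cdots + \tilde T_i$, and $g(r) := \lambda^{(k)}_r - r$ (with $g(0) := +\infty$ and $\lambda^{(k)}_r := 0$ past the length). Define $S_i$ to be the largest $r \ge 0$ with $g(r) \ge \mathbf d(k) - C_i$. Since $g$ is strictly decreasing and drops by at least $1$ per step, one has $0 \le S_i - S_{i-1} \le \tilde T_i$, so $\tilde s_i := S_i - S_{i-1}$ and $\tilde t_i := \tilde T_i - \tilde s_i$ are non-negative, as is $\tilde t_k := \mathbf d(k) - \sum_{i<k}\tilde t_i$ thanks to the general bound $g(r) \ge -r$. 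The partition $\nu_i^k$ is the sub-partition at rows $S_{i-1}+1,\ldots,S_i$ and columns $1,\ldots,\tilde t_i$ of $\lambda^{(k)}$, while the rectangles $\mu_{i,j}^k$ occupy the remaining $\tilde s_i \times \tilde t_j$ blocks to its right; the multiplicities $m_{[i,k-1]} = s^k_i$ and $m_{[i,n]} = t^n_i$ assemble into a lace class $\eta$ with $\mathbf{dim}(\eta) = \mathbf d$.

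The main obstacle is verifying that $\Psi$ and $\Psi^{-1}$ are mutual inverses, which reduces to showing that row $r$ of the glued $\lambda^{(k)}$ satisfies $g(r) \ge \mathbf d(k) - C_i$ if and only if $r \le S_i$. Using $\tilde t_1 + \cdots + \tilde t_i = C_i - S_i$, this follows from the two estimates $\lambda^{(k)}_{S_i} \ge \tilde t_{i+1} + \cdots + \tilde t_k = \mathbf d(k) - C_i + S_i$ and $\lambda^{(k)}_{S_i + 1} \le \tilde t_{i+1} + \cdots + \tilde t_k$, which are direct consequences of the row-width formula for the glued partition. Degenerate cases in which some $\tilde t_j = 0$ produce \emph{phantom} zero-width rows in the slab picture, but these are absorbed uniformly thanks to the strict monotonicity of $g$.
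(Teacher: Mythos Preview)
Your argument is correct and follows essentially the same route as the paper: both construct the inverse of $\Psi$ by an outer induction on $k$, using the previously computed values $t^{k-1}_{w^{(k)}(i)}$ to locate nested Durfee rectangles in $\lambda^{(k)}$ and then peeling off the $\nu_i^k$ and $\mu_{i,j}^k$ pieces. The only difference is notational---where the paper names the rectangles $D(\lambda^{(k)},\mathbf d(k)-C_i)$ directly, you recover their heights $S_i$ via the threshold function $g(r)=\lambda^{(k)}_r-r$; your bounds $0\le S_i-S_{i-1}\le \tilde T_i$ and $\tilde t_k\ge 0$ correspond to the paper's Claims~2.3 and~2.4.
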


\begin{figure}
\begin{picture}(200,280)
\put(20,0){\includegraphics[scale=2]{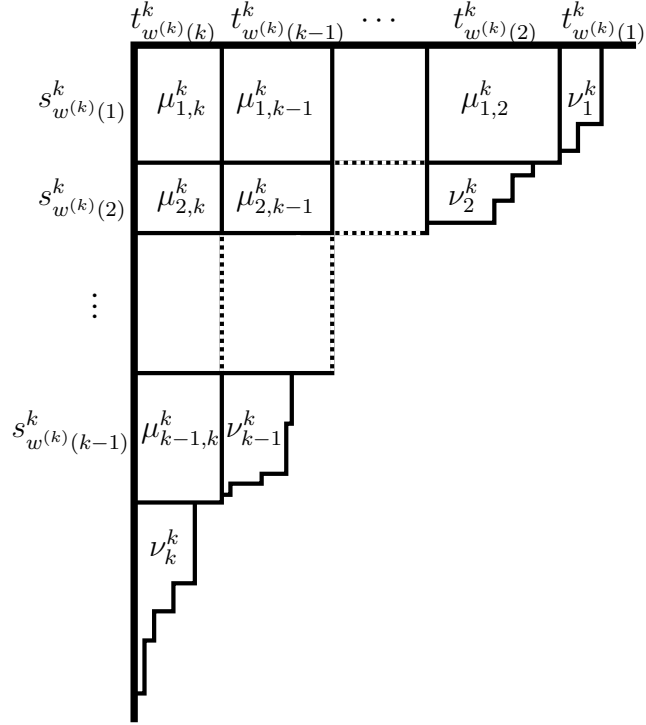}}
\put(20,265){$t_{w^{(k)}(k)}^k$}
\put(58,265){$t_{w^{(k)}(k-1)}^k$}
\put(142,265){$t_{w^{(k)}(2)}^k$}
\put(183,265){$t_{w^{(k)}(1)}^k$}
\put(-15,235){$s_{w^{(k)}(1)}^k$}
\put(-15,198){$s_{w^{(k)}(2)}^k$}
\put(-25,110){$s_{w^{(k)}(k-1)}^k$}
\put(107,265){$\cdots$}
\put(5,155){$\vdots$}

\put(30,235){$\mu_{1,k}^k$}
\put(60,235){$\mu_{1,k-1}^k$}
\put(145,235){$\mu_{1,2}^k$}
\put(30,198){$\mu_{2,k}^k$}
\put(60,198){$\mu_{2,k-1}^k$}
\put(24,110){$\mu_{k-1,k}^k$}

\put(56,110){$\nu_{k-1}^k$}
\put(185,235){$\nu_{1}^k$}
\put(140,198){$\nu_{2}^k$}
\put(27,65){$\nu_{k}^k$}
\end{picture}
\caption{Description of the $k$-th component of the map $\Psi:T\to S$}

\label{figure:firstglue}
\end{figure}

\begin{proof}

{\sf $\Psi$ is well-defined}:
This follows immediately from that fact that if
${\bf dim}(\eta)= \mathbf d$ then \[t_1^k(\eta)+\ldots + t_k^k(\eta)= {\mathbf d}(k).\]  

\noindent
{\sf $\Psi$ is weight-preserving}:
That ${\tt wt}_T(\boldsymbol \mu,\boldsymbol \nu))={\tt wt}_S(\Psi(\boldsymbol \mu,\boldsymbol \nu))$ is clear since $\Psi$ preserves the total number of
boxes.

\noindent
{\sf Definition of  $\Phi:S\rightarrow T$:}
Given a partition $\lambda$, and $i\in \mathbb Z$, 
the {\bf Durfee rectangle} $D(\lambda,i)$ is the rectangle with top left corner positioned at $(0,0)$ 
and bottom right corner where the line $x+y=i$ intersects the (infinite) boundary line of the partition. 
Equivalently, this is the largest $s\times(s+i)$ rectangle which fits in $\lambda$, justified against the top left corner.
(By convention, we define $0$-width and $0$-height rectangles as fitting in $\lambda$.)

\begin{example}
\label{example:durfee}
Let $ \lambda=(3,3,2,2,1)$.  Pictured below are the Durfee rectangles $D(\lambda,i)$ for $i=-1,0,4$.  
\setlength{\unitlength}{1em}
\ytableausetup{boxsize=1em}
\begin{center}
\begin{tikzpicture}[x=1em,y=1em]
\node[ below right] at (0,7){
\begin{ytableau}
*(gray) & *(gray) &    \\
*(gray)  & *(gray) &  \\
 *(gray)  &*(gray)   \\
  &  \\
   \\
\end{ytableau}};
\node[ below right]  at (0,8.25){$D(\lambda,-1)$};
\draw[very thick](-1.65,6.65)--(5.35,6.65);
\draw[very thick](.35,6.65)--(.35,-.35);
\draw[very thick](-.65,6.65)--(2.45,3.5);
\draw[line width=.2em](.35,6.65)rectangle (2.45,3.5);
\filldraw [color=black,fill=black,thick](2.5,3.5)circle(.2);
\filldraw [color=black,fill=black,thick](-.6,6.6)circle(.2);

\node[ below right] at (8,7){
\begin{ytableau}
*(gray) & *(gray) &    \\
*(gray)  & *(gray) &  \\
   &   \\
  &  \\
   \\
\end{ytableau}};

\node[ below right]  at (8,8.25){$D(\lambda,0)$};
\draw[very thick](8.30,6.65)--(13.35,6.65);
\draw[very thick](8.35,6.65)--(8.35,-.35);
\draw[very thick](8.35,6.65)--(10.45,4.5);
\draw[line width=.2em](8.35,6.65)rectangle (10.45,4.5);
\filldraw [color=black,fill=black,thick](10.5,4.5)circle(.2);
\filldraw [color=black,fill=black,thick](8.35,6.6)circle(.2);

\node[ below right] at (16,7){
\begin{ytableau}
\ &  &    \\
& &  \\
   &   \\
  &  \\
   \\
\end{ytableau}};

\node[ below right]  at (16,8.25){$D(\lambda,4)$};
\draw[very thick](16.30,6.65)--(21.35,6.65);
\draw[very thick](16.35,6.65)--(16.35,-.35);
\draw[line width=.2em](16.35,6.6)--(20.45,6.6);
\filldraw [color=black,fill=black,thick](20.5,6.6)circle(.2);

\end{tikzpicture}
\end{center}
Notice that $D(\lambda,4)=0\times 4$ rectangle.  The line $x+y=4$ intersects the boundary of $\lambda$ at the point $(4,0)$.
\qedhere
\end{example}

To define $\Phi$, we need to first recursively define parameters $t_i^k$
for $1\leq i\leq k$. Our initial condition is that $t_1^1= d(1)$.  
Assume $t^{k-1}_1,\ldots, t_{k-1}^{k-1}$ has been previously determined.  
Let
\begin{equation}
\label{eqn:deltaDef}
\delta_i^{k}=D(\lambda^{(k)}, {\mathbf d}(k)-(t^{k-1}_{w^{(k)}(1)}+\ldots + t^{k-1}_{w^{(k)}(i)})) \text{\ \ for $i=1,\ldots, k-1$.}
\end{equation}

Suppose 
\begin{equation}
\label{eqn:ab-def}
\delta_i^k=a_i^k\times b_i^k \text{ rectangle.}
\end{equation}
    Let 
 \begin{equation}
\label{eqn:t1value}
t_{w^{(k)}(i)}^k= {\mathbf d}(k)-b_i^k-(t_{w^{(k)}(1)}^k+\ldots + t_{w^{(k)}(i-1)}^k) \text{\ for $i=1,\ldots, k-1$.}
\end{equation}
Finally, let
\begin{equation}
\label{eqn:tgenvalue} 
t_{w^{(k)}(k)}^k=t_k^k= {\mathbf d}(k)-(t_{w^{(k)}(1)}^k + \ldots t_{w^{(k)}(k-1)}^k).
\end{equation} 
Continue this procedure until $k=n$. 

Notice that by construction,
we have:
\begin{claim}
\label{claim:vhvg}
For $2\leq k\leq n$,
\[a_1^k\leq a_2^k\leq\cdots\leq a_{k-1}^k\]
and
\[b_1^k\geq b_2^k\geq\cdots\geq b_{k-1}^k.\]
\end{claim}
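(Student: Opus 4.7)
The plan is to reduce the claim to a single monotonicity property of Durfee rectangles. Introduce the shorthand
\[c_i = {\mathbf d}(k)-\bigl(t^{k-1}_{w^{(k)}(1)}+\cdots+t^{k-1}_{w^{(k)}(i)}\bigr),\]
so that by (\ref{eqn:deltaDef}) we have $\delta_i^k=D(\lambda^{(k)},c_i)$ and $b_i^k-a_i^k=c_i$. First I would record the trivial but essential fact that
\[c_1\ge c_2\ge\cdots\ge c_{k-1},\]
which is immediate since the successive differences $c_i-c_{i+1}=t^{k-1}_{w^{(k)}(i+1)}$ are non-negative.

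Next, I would isolate the following monotonicity lemma: for any partition $\lambda$ and integers $c'\le c$, writing $D(\lambda,c)=a\times b$ and $D(\lambda,c')=a'\times b'$, one has $a'\ge a$ and $b'\le b$. Granting this lemma, the claim follows at once by applying it with $c=c_i$ and $c'=c_{i+1}$ for $i=1,\ldots,k-2$.

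To prove the lemma I would use the characterization that, for $a\ge 1$, the rectangle $a\times(a+c)$ fits in $\lambda$ iff $\lambda_a\ge a+c$, so $a(c):=a$ is the largest such integer (with the convention that $a=0$ is always allowed for $c\ge 0$). The inequality $a'\ge a$ is then obvious: lowering $c$ only relaxes the defining inequality $\lambda_a\ge a+c$. The width inequality $b'\le b$ needs the additional observation
\[a(c-1)-a(c)\le 1,\]
which I would verify by contradiction: if $a(c-1)\ge a(c)+2$, then $\lambda_{a(c)+2}\ge a(c)+1+c$, and monotonicity of $\lambda$ forces $\lambda_{a(c)+1}\ge a(c)+1+c$, contradicting the maximality of $a(c)$. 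Iterating this bound, $a(c')-a(c)\le c-c'$, and then
\[b'-b=(a'-a)-(c-c')\le 0,\]
completing the lemma and hence the claim.

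The main (mild) obstacle is the width comparison: the monotonicity of $a(c)$ alone is not enough, and one really does need the one-step bound $a(c-1)-a(c)\le 1$, which uses that $\lambda$ has weakly decreasing parts in an essential way. Once that is in hand, everything else is a bookkeeping exercise with the definitions of $c_i$, $a_i^k$, and $b_i^k$.
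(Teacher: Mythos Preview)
Your argument is correct. The paper itself offers no proof beyond the phrase ``by construction,'' so you are supplying the details the authors omit: the offsets $c_i$ are weakly decreasing, and for a fixed partition the Durfee rectangle $D(\lambda,c)$ has height weakly increasing and width weakly decreasing as $c$ drops. Your proof of the monotonicity lemma via the one-step bound $a(c-1)-a(c)\le 1$ is clean and correct; this is exactly the algebraic translation of the geometric fact that the intersection of the line $x+y=c$ with the staircase boundary of $\lambda$ moves monotonically down and to the left as $c$ decreases.

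One small point worth making explicit: you assert that $c_i-c_{i+1}=t^{k-1}_{w^{(k)}(i+1)}\ge 0$ as ``immediate.'' Strictly speaking, the nonnegativity of the recursively defined parameters $t^{k-1}_m$ has not yet been established at this point in the argument. It does hold, and is part of the same induction on $k$: once Claim~\ref{claim:vhvg} is known at level $k$, one checks $t^k_{w^{(k)}(i)}=b_{i-1}^k-b_i^k\ge 0$ for $i\ge 2$, $t^k_k=b_{k-1}^k\ge 0$, and $t^k_{w^{(k)}(1)}=\mathbf d(k)-b_1^k\ge 0$ (the last because $\lambda^{(k)}$ has parts at most $\mathbf d(k)$ and $c_1\le\mathbf d(k)$). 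The paper is equally silent on this point, so you are not omitting anything the authors provide, but if you want the argument to stand alone it is worth one sentence.
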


   We now
also fix parameters $s_i^k$ for $1\leq i\leq k-1$. Here we set
\begin{equation}
\label{eqn:s1value}
s_{w^{(k)}(1)}^k=a_1^k
\end{equation}
and  
\begin{equation}
\label{eqn:sgenvalue}
s_{w^{(k)}(i)}^k=a_i^k-a_{i-1}^k \text{ \ for \ } i=2,\ldots, k-1.
\end{equation}
These parameters are nonnegative integers, by Claim~\ref{claim:vhvg}.

\begin{claim}
\label{claim:recursion}
$t_{w^{(k)}(i)}^k+s_{w^{(k)}(i)}^k=t_{w^{(k)}(i)}^{k-1}$\ for $1\leq i<k\leq n$. 
\end{claim}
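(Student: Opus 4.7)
The proof is a direct unwinding of the recursive definitions, so I expect no real obstacle beyond bookkeeping. My plan is as follows.

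First, I will introduce partial-sum notation to cut down on clutter: set
\[T^k_i := t_{w^{(k)}(1)}^k+\cdots + t_{w^{(k)}(i)}^k, \qquad 0\leq i\leq k,\]
with the convention $T^k_0=0$, so that the defining equation (\ref{eqn:t1value}) becomes
\[t_{w^{(k)}(i)}^k = T^k_i - T^k_{i-1} = \mathbf{d}(k)-b_i^k - T^k_{i-1} \qquad(1\leq i\leq k-1).\]

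Next, I will extract the key relation between the shape of the Durfee rectangle $\delta_i^k=a_i^k\times b_i^k$ and the partial sums. By the very definition (\ref{eqn:deltaDef}) of $\delta_i^k$ as $D(\lambda^{(k)},\mathbf{d}(k)-T^{k-1}_i)$, the shape of a Durfee rectangle $D(\lambda,m)=s\times(s+m)$ gives
\[b_i^k - a_i^k = \mathbf{d}(k)-T^{k-1}_i.\]
Substituting into the rewritten (\ref{eqn:t1value}) and simplifying yields the clean identity
\[T^{k-1}_i - T^k_i = a_i^k \qquad (1\leq i\leq k-1),\]
which is the only nontrivial content of the argument.

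With this identity in hand the claim is immediate. For $i=1$, using (\ref{eqn:s1value}),
\[t_{w^{(k)}(1)}^k + s_{w^{(k)}(1)}^k = T^k_1 + a_1^k = T^k_1 + (T^{k-1}_1 - T^k_1) = T^{k-1}_1 = t_{w^{(k)}(1)}^{k-1}.\]
For $2\leq i\leq k-1$, using (\ref{eqn:sgenvalue}) and telescoping,
\[t_{w^{(k)}(i)}^k + s_{w^{(k)}(i)}^k = (T^k_i - T^k_{i-1}) + (a_i^k - a_{i-1}^k) = (T^{k-1}_i - T^{k-1}_{i-1}) = t_{w^{(k)}(i)}^{k-1},\]
where the middle equality uses the boxed identity $T^{k-1}_j - T^k_j = a_j^k$ applied at $j=i$ and $j=i-1$. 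This completes the verification.

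The only place where anything nonformal enters is the shape relation $b_i^k - a_i^k = \mathbf{d}(k)-T^{k-1}_i$, which is simply the definition of $D(\lambda,m)$ as an $s\times(s+m)$ rectangle; the rest is index tracking. If a pedantic check is needed, I would also note that the nonnegativity of $s_{w^{(k)}(i)}^k$ supplied by Claim~\ref{claim:vhvg} guarantees that every expression here sits in $\mathbb{Z}_{\geq 0}$, so the identity is meaningful.
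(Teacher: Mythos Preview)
Your argument is correct and is essentially the paper's induction repackaged through partial sums: the paper proves $t_{w^{(k)}(i)}^k+s_{w^{(k)}(i)}^k=t_{w^{(k)}(i)}^{k-1}$ by induction on $i$, while you establish the summed form $T_i^{k-1}-T_i^k=a_i^k$ directly and then difference it, which is the same computation read in reverse.

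One notational point to fix. As written, your definition $T^k_i := \sum_{j\le i} t_{w^{(k)}(j)}^k$ makes the superscript on $w$ track the superscript on $T$, so literally $T^{k-1}_i=\sum_{j\le i} t_{w^{(k-1)}(j)}^{k-1}$. But what the argument actually uses (and what appears in (\ref{eqn:deltaDef}) and in your own final lines, e.g.\ $T^{k-1}_1=t_{w^{(k)}(1)}^{k-1}$) is the sum $\sum_{j\le i} t_{w^{(k)}(j)}^{k-1}$ with the permutation $w^{(k)}$ held fixed. Since the sequence $\mathbf{w}=(w^{(1)},\ldots,w^{(n)})$ is arbitrary in Theorem~\ref{thm:main}, $w^{(k)}$ and $w^{(k-1)}$ need not agree on $\{1,\ldots,k-1\}$, so the two readings differ. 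Either fix $k$ at the outset and declare $T^m_i:=\sum_{j\le i} t_{w^{(k)}(j)}^m$, or use a separate symbol for the level-$(k-1)$ partial sum. With that clarified, your proof goes through verbatim.
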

\begin{proof}
Fix $k$.  Our proof is by induction on
$i$. 

In the base case $i=1$, we have 
\begin{align*}
t_{w^{(k)}(1)}^k+s_{w^{(k)}(1)}^k&={\mathbf d}(k)-b_1^k+a_1^k & \text{(by
(\ref{eqn:t1value}) and (\ref{eqn:s1value}))}\\
&={\mathbf d}(k)-({\mathbf d}(k)-t_{w^{(k)}(1)}^{k-1}+a_1^k)+a_1^k & \text{(by (\ref{eqn:ab-def}))}\\
&=t_{w^{(k)}(1)}^{k-1}.
\end{align*}  Now assume 
\[t_{w^{(k)}(j)}^k+s_{w^{(k)}(j)}^k=t_{w^{(k)}(j)}^{k-1}\] 
holds for all $j<i$.  Then
\begin{align*} 
t_{w^{(k)}(i)}^k+s_{w^{(k)}(i)}^k&= {\mathbf d}(k)-b_i-(t_{w^{(k)}(1)}^k+\ldots + t_{w^{(k)}(i-1)}^k)+a_i^k-a_{i-1}^k \\
&= {\mathbf d}(k)-( {\mathbf d}(k)-(t^{k-1}_{w^{(k)}(1)}+\ldots + t^{k-1}_{w^{(k)}(i)})+a_i^k) \\
& \ \ \ \ -(t_{w^{(k)}(1)}^k+\ldots 
+ t_{w^{(k)}(i-1)}^k)+a_i^k-a_{i-1}^k & \text{(by (\ref{eqn:tgenvalue}) and (\ref{eqn:sgenvalue}))}\\
&=t_{w^{(k)}(i)}^{k-1}+(t_{w^{(k)}(1)}^{k-1}-t_{w^{(k)}(1)}^k)+\ldots\\
&\ \ \ \  +(t_{w^{(k)}(i-1)}^{k-1}-t_{w^{(k)}(i-1)}^k)-a_{i-1}^k\\
&=t_{w^{(k)}(i)}^{k-1}+s_{w^{(k)}(1)}^{k}+\ldots+s_{w^{(k)}(i-1)}^{k}-a_{i-1}^k & \text{(induction)}\\
&=t_{w^{(k)}(i)}^{k-1}. &\qedhere
\end{align*}
\end{proof}

\begin{claim}
\label{claim:laceParameters}
Let $\eta(\boldsymbol\lambda)$ be the equivalence class of a lacing diagram uniquely defined
by requiring that the number of strands:
\begin{itemize}
\item from $i$ to $j$ is 
$s^{j+1}_i$ for $1\leq i\leq j\leq n-1$;
\item 
from $i$ to $n$ is $t_i^n$ for $i=1\ldots n$.
\end{itemize}
Then:
\begin{enumerate}
\item $s_i^{k}(\eta(\boldsymbol\lambda))=s_i^{k}$
\item $t_j^{k}(\eta(\boldsymbol\lambda))=t_j^k$
\item ${\rm dim}(\eta(\boldsymbol\lambda))=\mathbf d$.
\end{enumerate} 
\end{claim}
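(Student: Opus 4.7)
The plan is to exploit the fact that an equivalence class $\eta$ of lacing diagrams is fully determined by its strand multiplicities $m_{[i,j]}(\eta)$, together with two universal identities: $s_i^k(\eta)=m_{[i,k-1]}(\eta)$ (noted in the proof of Corollary~\ref{cor:cancel}) and $t_j^k(\eta)=\sum_{j'=k}^{n} m_{[j,j']}(\eta)$ (immediate from definition~(\ref{eqn:tjk})). Before invoking these, I would check that $\eta(\boldsymbol\lambda)$ is well-defined, namely that all the prescribed multiplicities are nonnegative integers: the values $s_i^{j+1}$ are nonnegative by Claim~\ref{claim:vhvg}, and nonnegativity of the remaining $t_i^n$ falls out of Claim~\ref{claim:recursion} together with the recursive construction in~(\ref{eqn:t1value})--(\ref{eqn:tgenvalue}).

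For part (1), applying the first universal identity and unwinding the definition of $\eta(\boldsymbol\lambda)$ gives $s_i^k(\eta(\boldsymbol\lambda))=m_{[i,k-1]}(\eta(\boldsymbol\lambda))=s_i^k$ directly, with no computation required.

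For part (2), I would substitute the defining multiplicities of $\eta(\boldsymbol\lambda)$ into the second universal identity to obtain
\[
t_j^k(\eta(\boldsymbol\lambda))=\sum_{j'=k}^{n-1} s_j^{j'+1}+t_j^n.
\]
By Claim~\ref{claim:recursion} each $s_j^{j'+1}$ equals $t_j^{j'}-t_j^{j'+1}$ (for $j\le j'\le n-1$), so the sum telescopes to $t_j^k-t_j^n$, and the remaining $+t_j^n$ yields exactly $t_j^k$. This handles the full range $1\le j\le k$ uniformly, including the boundary case $j=k$.

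For part (3), the dimension at column $k$ of any representative of $\eta(\boldsymbol\lambda)$ counts vertices there, namely $\sum_{j=1}^k t_j^k(\eta(\boldsymbol\lambda))$; by part (2) this equals $\sum_{j=1}^k t_j^k$, which equation~(\ref{eqn:tgenvalue}) identifies with $\mathbf d(k)$. The main obstacle is not computational but notational: one must recognize that the permutations $w^{(k)}$ only shuffle indices within $\{1,\ldots,k-1\}$ (since $w^{(k)}(k)=k$), so Claim~\ref{claim:recursion} is really the symmetry-free telescoping identity $t_j^k+s_j^k=t_j^{k-1}$ for all $1\le j<k\le n$. Once that is seen, the telescoping collapses cleanly and the three parts drop out in order.
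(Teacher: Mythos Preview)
Your proof is correct and mirrors the paper's argument: part~(1) is immediate from the definition, part~(2) is the same telescoping via Claim~\ref{claim:recursion} (the paper runs it from the parameter $t_i^k$ down to $t_i^n$ and then back up inside $\eta(\boldsymbol\lambda)$, whereas you start from $t_j^k(\eta(\boldsymbol\lambda))=\sum_{j'\ge k}m_{[j,j']}$ and telescope once, but the content is identical), and part~(3) follows from~(2) exactly as you wrote. Your explicit remark on well-definedness---nonnegativity of the prescribed strand multiplicities---is a welcome addition that the paper leaves implicit.
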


\begin{proof}
(1) By hypothesis.

(2) 
By Claim~\ref{claim:recursion}, $t_i^k=t_i^{k+1}+s_i^{k+1}$.  Iterating, we obtain 
\begin{align*}
t_i^k&=t_i^{k+2}+s_i^{k+2}+s_i^{k+1}\\
&=\ldots\\
& =t_i^n+\sum_{\ell=k+1}^n s_i^\ell\\
&=t_i^n(\eta(\boldsymbol \lambda))+\sum_{\ell=k+1}^n s_i^\ell(\eta(\boldsymbol \lambda))&\text{(by hypothesis)}\\
&=t_i^k(\eta(\boldsymbol \lambda)).
\end{align*}

(3)  Let $\widetilde{\mathbf d}={\bf dim}(\eta(\boldsymbol\lambda))$.  By (2), we have \[ {\mathbf d}(k)=t_1^k+\ldots + t_k^k=t_1^k(\eta(\boldsymbol\lambda))+\ldots+t_k^k(\eta(\boldsymbol\lambda))=\widetilde{\mathbf d}(k). \qedhere\] 
\end{proof}

In view of Claim~\ref{claim:laceParameters},
we may disassemble each $\lambda^{(k)}$ as in Figure~\ref{figure:firstglue} to obtain rectangles of size 
\[s^k_{w^{(k)}(i)}(\eta(\boldsymbol\lambda))\times t^k_{w^{(k)}(j)}(\eta(\boldsymbol\lambda))
\text{\ (where $1\leq i<j\leq k$)}\] 
and partitions 
\[\nu_i^k\in P_{w^{(k)}(i)}^k(\eta(\boldsymbol\lambda)) \text{\ (where $1\leq i\leq k$).}\]  
That is, we have associated to $\boldsymbol\lambda$ a pair $(\boldsymbol\mu,\boldsymbol\nu)\in T(\eta(\boldsymbol\lambda))\subseteq T$.  This shows 
 $\Phi:S\rightarrow T$, as desired. 
 
 \noindent
 {\sf $\Phi$ is weight-preserving:} This is clear.

\begin{example}
\label{example:inverseMap}
Let $Q$ be an {\bf equioriented} quiver on $3$ vertices, i.e. all arrows point in the same direction.  
\begin{center}
\begin{tikzpicture}[x=.75cm,y=.75cm]
\filldraw [color=black,fill=black,thick](0,0)circle(.1);
\filldraw [color=black,fill=black,thick](1,0)circle(.1);
\filldraw [color=black,fill=black,thick](2,0)circle(.1);

\draw[ decoration={markings, mark=at position 0.6 with {\arrow[scale=2]{>}}}, postaction={decorate}](1,0) -- (2,0);
\draw[ decoration={markings, mark=at position 0.6 with {\arrow[scale=2]{>}}}, postaction={decorate}](0,0) -- (1,0);

\end{tikzpicture}
\end{center}
Then $\mathbf w_Q=(1,12,123)$.  Fix a dimension vector $ \mathbf d=(3,6,5)$ and partitions \[\lambda^{(1)}=(2,1), \lambda^{(2)}=(5,1), \text{\ and $\lambda^{(3)}=(3,3,2,1,1)$.}\]

\begin{center}
\ytableausetup{boxsize=1em}
\begin{tikzpicture}[x=1em,y=1em]
\node[ below right] at (-1.35,7.35){
\begin{ytableau}
\ &     \\
\\
\end{ytableau}};
\draw[very thick](-1,7)--(2,7);
\draw[very thick](-1,7)--(-1,0);

\node[ below right] at (3.65,7.35){
\begin{ytableau}
*(gray)&*(gray) &*(gray) &*(gray) &    \\
 \\
\end{ytableau}};
\draw[very thick](4,7)--(10.2,7);
\draw[very thick](4,7)--(4,0);
\draw[line width=.2em](4,7)rectangle (8.2,6);
\draw[very thick](7.1,7)--(8.1,6);
\filldraw [color=black,fill=black,thick](8.2,6)circle(.2);
\filldraw [color=black,fill=black,thick](7.15,7)circle(.2);

\node[ below right] at (12.65,7.35){
\begin{ytableau}
*(gray) &*(gray) &  \\
*(gray) &*(gray) &   \\
 *(gray)  &*(gray)   \\
 \\
 \\
 \end{ytableau}};
\draw[very thick](12,7)--(18,7);
\draw[very thick](13,7)--(13,0);

\draw[line width=.2em](13,7)--(16,7);
\filldraw [color=black,fill=black,thick](16.15,7)circle(.2);

\draw[line width=.2em](13,7)rectangle (15.1,3.9);
\draw[very thick](12,7)--(15,4);
\filldraw [color=black,fill=black,thick](12.05,7)circle(.2);
\filldraw [color=black,fill=black,thick](15.15,3.9)circle(.2);

\node[] at (.5,8){$t_1^1$};

\node[] at (9,8){$t_1^2$};
\node[] at (6,8){$t_2^2$};
\node[] at (3,6.5){$s_1^2$};

\node[] at (14,8){$t_3^3$};
\node[] at (15.6,8){$t_2^3$};
\node[] at (17.2,8){$t_1^3$};
\node[] at (12,5.4){$s_2^3$};
\end{tikzpicture}
\end{center}
Then 
\[\delta_1^2=D(\lambda^{(2)},6-3)=1\times 4
\text{ rectangle, $t_1^2=2$, and $t_2^2=4$.}\]  From this, we have \[\delta_1^3=D(\lambda^{(3)},5-2)=0\times 3
\text{\ and $\delta_2^3=D(\lambda^{(3)},5-2-4)=3\times 2$ rectangles.}\]  
So $t_1^3=2$, $t_2^3=1$, and $t_3^3=2$.  This corresponds to $\eta(\boldsymbol \lambda)=[\mathcal L]$ where

\begin{center}
\begin{tikzpicture}[x=1em,y=1em]
\node[] at (-1.5,1.5){$\mathcal L=$};

\filldraw [color=black,fill=black,thick](0,2)circle(.1);
\filldraw [color=black,fill=black,thick](0,1)circle(.1);
\filldraw [color=black,fill=black,thick](0,0)circle(.1);

\filldraw [color=black,fill=black,thick](1,5)circle(.1);
\filldraw [color=black,fill=black,thick](1,4)circle(.1);
\filldraw [color=black,fill=black,thick](1,3)circle(.1);
\filldraw [color=black,fill=black,thick](1,2)circle(.1);
\filldraw [color=black,fill=black,thick](1,1)circle(.1);
\filldraw [color=black,fill=black,thick](1,0)circle(.1);

\filldraw [color=black,fill=black,thick](2,4)circle(.1);
\filldraw [color=black,fill=black,thick](2,3)circle(.1);
\filldraw [color=black,fill=black,thick](2,2)circle(.1);
\filldraw [color=black,fill=black,thick](2,1)circle(.1);
\filldraw [color=black,fill=black,thick](2,0)circle(.1);

\draw[color=black,fill=black, thick](0,0)--(2,0);
\draw[color=black,fill=black, thick](0,1)--(2,1);
\draw[color=black,fill=black, thick](1,2)--(2,2);
\end{tikzpicture}
\end{center}
Alternatively, if $Q$ is {\bf bipartite}, that is adjacent arrows point in opposite directions, then $\mathbf w_Q=(1,12,213)$. 
\begin{center}
\begin{tikzpicture}[x=.75cm,y=.75cm]
\filldraw [color=black,fill=black,thick](0,0)circle(.1);
\filldraw [color=black,fill=black,thick](1,0)circle(.1);
\filldraw [color=black,fill=black,thick](2,0)circle(.1);

\draw[ decoration={markings, mark=at position 0.6 with {\arrow[scale=2]{<}}}, postaction={decorate}](1,0) -- (2,0);
\draw[ decoration={markings, mark=at position 0.6 with {\arrow[scale=2]{>}}}, postaction={decorate}](0,0) -- (1,0);

\end{tikzpicture}
\end{center}
 Keeping the same dimension vector and partitions $\lambda^{(k)}$  gives the following.
\begin{center}
\ytableausetup{boxsize=1em}
\begin{tikzpicture}[x=1em,y=1em]
\node[ below right] at (-1.35,7.35){
\begin{ytableau}
\ &     \\
\\
\end{ytableau}};
\draw[very thick](-1,7)--(2,7);
\draw[very thick](-1,7)--(-1,0);

\node[ below right] at (3.65,7.35){
\begin{ytableau}
*(gray)&*(gray) &*(gray) &*(gray) &    \\
 \\
\end{ytableau}};
\draw[very thick](4,7)--(10.2,7);
\draw[very thick](4,7)--(4,0);
\draw[line width=.2em](4,7)rectangle (8.2,6);
\draw[very thick](7.1,7)--(8.1,6);
\filldraw [color=black,fill=black,thick](8.2,6)circle(.2);
\filldraw [color=black,fill=black,thick](7.15,7)circle(.2);

\node[ below right] at (12.65,7.35){
\begin{ytableau}
*(gray) &*(gray) &*(gray)  \\
*(gray) &*(gray) & *(gray)  \\
 *(gray)  &*(gray)   \\
 \\
 \\
 \end{ytableau}};
\draw[very thick](12,7)--(18,7);
\draw[very thick](13,7)--(13,0);

\draw[line width=.2em](13,7)rectangle(16.2,5);
\filldraw [color=black,fill=black,thick](14.1,7)circle(.2);
\filldraw [color=black,fill=black,thick](16.2,4.95)circle(.2);
\draw[very thick](14,7)--(16.1,5);

\draw[line width=.2em](13,7)rectangle (15.1,3.9);
\draw[very thick](12,7)--(15,4);
\filldraw [color=black,fill=black,thick](12.05,7)circle(.2);
\filldraw [color=black,fill=black,thick](15.15,3.9)circle(.2);

\node[] at (.5,8){$t_1^1$};

\node[] at (9,8){$t_1^2$};
\node[] at (6,8){$t_2^2$};
\node[] at (3,6.5){$s_1^2$};

\node[] at (14,8){$t_3^3$};
\node[] at (15.6,8){$t_1^3$};
\node[] at (17.2,8){$t_2^3$};
\node[] at (12,4.4){$s_1^3$};
\node[] at (12,6){$s_2^3$};
\end{tikzpicture}
\end{center}
As before, 
\[\delta_1^2=D(\lambda^{(2)},6-3)=1\times 4
\text{\ rectangle.}\]  
Consequently, 
\[\delta_1^3=D(\lambda^{(3)},5-4)=2\times 3
\text{\ and $\delta_2^3=D(\lambda^{(3)},5-4-2)=3\times 2$ rectangles.}\]  
This yields  $\eta(\boldsymbol\lambda)=[\mathcal L']$, where

\begin{center}
\begin{tikzpicture}[x=1em,y=1em]
\node[] at (-1.5,1.5){$\mathcal L'=$};

\filldraw [color=black,fill=black,thick](0,2)circle(.1);
\filldraw [color=black,fill=black,thick](0,1)circle(.1);
\filldraw [color=black,fill=black,thick](0,0)circle(.1);

\filldraw [color=black,fill=black,thick](1,5)circle(.1);
\filldraw [color=black,fill=black,thick](1,4)circle(.1);
\filldraw [color=black,fill=black,thick](1,3)circle(.1);
\filldraw [color=black,fill=black,thick](1,2)circle(.1);
\filldraw [color=black,fill=black,thick](1,1)circle(.1);
\filldraw [color=black,fill=black,thick](1,0)circle(.1);

\filldraw [color=black,fill=black,thick](2,4)circle(.1);
\filldraw [color=black,fill=black,thick](2,3)circle(.1);
\filldraw [color=black,fill=black,thick](2,2)circle(.1);
\filldraw [color=black,fill=black,thick](2,1)circle(.1);
\filldraw [color=black,fill=black,thick](2,0)circle(.1);

\draw[color=black,fill=black, thick](0,0)--(2,0);
\draw[color=black,fill=black, thick](1,1)--(2,1);
\draw[color=black,fill=black, thick](1,2)--(2,2);
\draw[color=black,fill=black, thick](0,1)--(1,3);
\end{tikzpicture}
\end{center}
\qed
\end{example}

It remains to establish:

\begin{claim} $\Phi$ and $\Psi$ are mutual inverses.
\label{claim:mutualInverses}
\end{claim}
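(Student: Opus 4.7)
The plan is to verify both compositions are identities by a nested induction — outer on $k$, inner on $i$ — with the conceptual content being that the Durfee-rectangle extraction in $\Phi$ reads back exactly the rectangles that $\Psi$ has laid down in Figure~\ref{figure:firstglue}.

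The direction $\Psi \circ \Phi = \mathrm{id}_S$ is essentially tautological: $\Phi$ decomposes each $\lambda^{(k)}$ into the staircase of rectangles $\mu_{i,j}^k$ and diagonal partitions $\nu_i^k$ pictured in Figure~\ref{figure:firstglue}, and $\Psi$ is defined by gluing these pieces back together into the same picture.

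The substantive direction is $\Phi \circ \Psi = \mathrm{id}_T$. Fix $(\boldsymbol\mu,\boldsymbol\nu)\in T(\eta)$ and let $\boldsymbol\lambda=\Psi(\boldsymbol\mu,\boldsymbol\nu)$. Inducting on $i$ (with the outer induction on $k$ supplying $t^{k-1}_j(\eta)$ already recovered), I would show that the Durfee rectangle $\delta_i^k$ computed by $\Phi$ on $\lambda^{(k)}$ has dimensions $a_i^k\times b_i^k$ with
\[
a_i^k=\sum_{j=1}^i s^k_{w^{(k)}(j)}(\eta) \qquad\text{and}\qquad b_i^k=\sum_{j=i+1}^k t^k_{w^{(k)}(j)}(\eta).
\]
The key arithmetic, using Claim~\ref{claim:recursion}, is
\[
b_i^k-a_i^k=\mathbf{d}(k)-\sum_{j=1}^i\bigl(t^k_{w^{(k)}(j)}(\eta)+s^k_{w^{(k)}(j)}(\eta)\bigr)=\mathbf{d}(k)-\sum_{j=1}^i t^{k-1}_{w^{(k)}(j)}(\eta),
\]
which matches the offset prescribed in (\ref{eqn:deltaDef}). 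It then suffices to check (i) the candidate $a_i^k\times b_i^k$ rectangle fits in $\lambda^{(k)}$, which is immediate from the staircase, and (ii) the next $s\times(s+(b_i^k-a_i^k))$ rectangle with $s=a_i^k+1$ does not fit, because row $a_i^k+1$ lies in the $(i{+}1)$-st step of the staircase, whose length is at most $\sum_{j\geq i+1} t^k_{w^{(k)}(j)}(\eta)=b_i^k$ (the parts of $\nu_{i+1}^k$ are bounded by $t^k_{w^{(k)}(i+1)}(\eta)$). Once each $\delta_i^k$ is pinned down, formulas (\ref{eqn:t1value})--(\ref{eqn:sgenvalue}) recover the original $t^k_\bullet(\eta)$ and $s^k_\bullet(\eta)$, and the rectangles and partitions peeled off by $\Phi$ are then literally the original $\mu_{i,j}^k$ and $\nu_i^k$.

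The main obstacle is the staircase bookkeeping through the permutation $w^{(k)}$: the $i$-th Durfee rectangle is cut by a line whose offset sums over $w^{(k)}(1),\ldots,w^{(k)}(i)$ while the Figure~\ref{figure:firstglue} placements are also indexed by $w^{(k)}$, and one must verify in step (ii) that it is only $\nu_{i+1}^k$ (not some later $\nu_{i'}^k$) that controls the relevant row. This is exactly the convexity of the staircase encoded by Claim~\ref{claim:vhvg}: the inequalities $a_1^k\leq\cdots\leq a_{k-1}^k$ and $b_1^k\geq\cdots\geq b_{k-1}^k$ guarantee that the successive Durfee rectangles nest properly and that each $\nu_{i'}^k$ sits strictly below row $a_{i'-1}^k$, closing the induction.
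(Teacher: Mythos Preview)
Your proposal is correct and follows essentially the same approach as the paper: show $\Psi\circ\Phi=\mathrm{id}_S$ is immediate, then for $\Phi\circ\Psi$ verify by induction on $k$ that the Durfee rectangles $\delta_i^k$ extracted from $\lambda^{(k)}$ coincide with the staircase rectangles built into Figure~\ref{figure:firstglue}, using the recursion $t^k+s^k=t^{k-1}$ (Claim~\ref{claim:recursion}) to match the Durfee offsets. The paper phrases the maximality of the candidate rectangle as ``its bottom right corner meets the boundary of $\lambda^{(k)}$'' rather than your explicit check that row $a_i^k+1$ has length at most $b_i^k$, but these are the same observation; your inner induction on $i$ is harmless but unnecessary, since for fixed $k$ all the offsets in (\ref{eqn:deltaDef}) depend only on the $t^{k-1}$ values already supplied by the outer induction.
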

\begin{proof}
Taking $\boldsymbol\lambda\in S$, we have $\Psi(\Phi(\boldsymbol\lambda))=\boldsymbol\lambda$, since $\Phi$ acts by cutting the $\lambda^{(k)}$'s into various pieces and $\Psi$ glues these shapes together into their original configurations.  Now given $(\boldsymbol\mu,\boldsymbol\nu)\in T(\eta)$, let $\boldsymbol \lambda:=\Psi(\boldsymbol\mu,\boldsymbol\nu)$.  We must argue $\eta=\eta(\boldsymbol \lambda)$.  If so, $\Phi(\Psi(\boldsymbol\mu,\boldsymbol\nu))=(\boldsymbol\mu,\boldsymbol\nu).$

Since $\boldsymbol\lambda=\Psi(\boldsymbol\mu,\boldsymbol\nu)$ and $(\boldsymbol\mu,\boldsymbol\nu)\in T(\eta)$, each $\lambda^{(k)}$ contains a rectangle  
\begin{equation}
\label{eqn:epsilonRectangle}
\epsilon_j^k=\left(\sum_{i=1}^j s_{w^{(k)}(i)}^{k}(\eta)\right)\times \left(\sum_{i=j+1}^{k}t_{w^{(k)}(i)}^k(\eta)\right)
\end{equation}
 for all $1\leq j<k$ as in Figure~\ref{figure:firstglue}.  

By definition, ${\bf dim}(\eta)= \mathbf d$.  Then it follows
\[\sum_{i=j+1}^{k}t_{w^{(k)}(i)}^k(\eta)= {\mathbf d}(k)-\left(\sum_{i=1}^{j}t_{w^{(k)}(i)}^k(\eta)\right).
\]  From the definitions, $t_i^k(\eta)+s_i^k(\eta)=t_i^{k-1}(\eta)$.  So substituting 
 we have   
\begin{equation} 
\label{eqn:subs}
\sum_{i=j+1}^{k}t_{w^{(k)}(i)}^k(\eta)= {\mathbf d}(k)-\sum_{i=1}^{j}t_{w^{k}(i)}^{k-1}(\eta)+\sum_{i=1}^j s_{w^{(k)}(i)}^{k}(\eta).
\end{equation}

Substitution of (\ref{eqn:subs}) into (\ref{eqn:epsilonRectangle}) yields
\[\epsilon_j^k=s\times (s+ {\mathbf d}(k)-\sum_{i=1}^{j}t_{w^{k}(i)}^{k-1}(\eta)) \] contained in $\lambda^{(k)}$ (where $s=\sum_{i=1}^js_i^k(\eta)$). In particular, by construction, the bottom right corner of $\epsilon_j^k$ intersects the boundary of $\lambda^{(k)}$ (see Figure~\ref{figure:firstglue}), i.e. $s$ is the maximum value for which $\epsilon_j^k\subseteq \lambda^{(k)}$.  So by the definition of a Durfee rectangle,
\[\epsilon_j^k=D(\lambda^{(k)},{\mathbf d}(k)-\sum_{i=1}^{j}t_{w^{k}(i)}^{k-1}(\eta)).\]

By (\ref{eqn:deltaDef}) and Claim~\ref{claim:laceParameters} part (2), \[\delta_j^k=D(\lambda^{(k)}, {\mathbf d}(k)-\sum_{i=1}^{j}t_{w^{k}(i)}^{k-1}(\eta(\boldsymbol \lambda))).\] 
   Then if 
   \begin{equation}
   \label{eqn:equalt}
   \sum_{i=1}^{j}t_{w^{k}(i)}^{k-1}(\eta)=\sum_{i=1}^{j}t_{w^{k}(i)}^{k-1}(\eta(\boldsymbol \lambda))),
   \end{equation} it follows that $\delta_j^k=\epsilon_j^k$ since both are Durfee rectangles with the \emph{same} parameter, and are maximal among such rectangles.
   
   For $k=2$, since $t_1^1(\eta)=\mathbf d(1)=t_1^1(\eta(\boldsymbol \lambda))$, then 
   \begin{align*}\delta_1^2&=D(\lambda^{(2)},\mathbf d(2)-t_1^1(\eta))\\
   &=D(\lambda^{(2)}-\mathbf d(2)-t_1^1(\eta(\boldsymbol \lambda)))\\
   &=\epsilon_1^2,
   \end{align*}
   so the Durfee rectangles agree.
   Assume $\delta_j^{k-1}=\epsilon_j^{k-1}$ for all $1\leq j<k-1$.  Then in particular, $t_i^{k-1}(\eta)=t_i^{k-1}(\eta(\boldsymbol \lambda))$ for all $1\leq i\leq k-1$.  So by (\ref{eqn:equalt}), $\delta_j^k=\epsilon_j^k$.

Therefore, $s_i^k(\eta)=s_i^k(\eta(\boldsymbol \lambda))$ for all $1\leq i<k\leq n$ and $t_i^k(\eta)=t_i^k(\eta(\boldsymbol\lambda))$ for $1\leq i\leq k\leq n$. Hence $\eta=\eta(\boldsymbol\lambda)$.
\end{proof}
\end{proof}

Actually, the proof of Theorem~\ref{thm:bij} implies
an enriched form of Theorem~\ref{thm:main}. 

Let 
\[(z;q)_k=(1-qz)(1-q^2z)\cdots (1-q^kz).\]
Also, for a lace equivalence class $\eta$, let
${\tt leftstrands}_{\eta}(j)$ be the number of strands that
terminate at column $j$ in some (equivalently any)
lace diagram ${\mathcal L}\in \eta$. That is,
\begin{equation}
\label{eqn:leftstrands}
{\tt leftstrands}_{\eta}(j)=\sum_{i=1}^{j}s_i^{j+1}(\eta).
\end{equation}

\begin{corollary}[of Theorem~\ref{thm:bij}]
\label{cor:enriched}
\begin{equation}
\label{eqn:enriched}
\prod_{k=1}^{n} \frac{1}{(z;q)_{ {\mathbf d}(k)}}=\sum_{\eta}
q^{r_{\mathbf w}(\eta)} \prod_{k=1}^n 
z^{{\tt leftstrands}_{\eta}(k-1)}
\frac{1}{(z;q)_{t_k^k(\eta)}}
\prod_{i=1}^{k-1}{t_i^k(\eta)+s_i^k(\eta)\brack s_i^k(\eta)}_q.
\end{equation}
\end{corollary}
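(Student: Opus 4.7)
The plan is to refine the bijection $\Psi: T \to S$ from Theorem~\ref{thm:bij} so that it preserves the total number of parts $\sum_k \ell(\lambda^{(k)})$ in addition to the $q$-weight. First I reinterpret both sides of~(\ref{eqn:enriched}) as generating series. Expanding $\frac{1}{(z;q)_m} = \prod_{i=1}^m \frac{1}{1-zq^i} = \sum_{\lambda} z^{\ell(\lambda)}q^{|\lambda|}$, with $\lambda$ ranging over partitions having parts of size at most $m$, identifies the LHS as the generating function for $\boldsymbol\lambda \in S$ weighted by $z^{\sum_k \ell(\lambda^{(k)})} q^{\sum_k |\lambda^{(k)}|}$. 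On the RHS the $q$-contribution is exactly as in Theorem~\ref{thm:main}; the new $z$-factors come from the explicit prefactor $z^{{\tt leftstrands}_\eta(k-1)}$ and from the replacement of $\frac{1}{(q)_{t_k^k(\eta)}}$ by $\frac{1}{(z;q)_{t_k^k(\eta)}} = \sum_{\nu_k^k} z^{\ell(\nu_k^k)} q^{|\nu_k^k|}$, whereas each Gaussian binomial ${t_i^k+s_i^k\brack s_i^k}_q$ still tracks only $|\nu_i^k|$ for $i<k$. Hence the RHS becomes the generating function for $(\boldsymbol\mu,\boldsymbol\nu) \in T$ weighted by $z^{\sum_k ({\tt leftstrands}_\eta(k-1) + \ell(\nu_k^k))} q^{{\tt wt}_T(\boldsymbol\mu,\boldsymbol\nu)}$.

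Having set up both sides, it suffices to check that whenever $\boldsymbol\lambda = \Psi(\boldsymbol\mu,\boldsymbol\nu)$ with $(\boldsymbol\mu,\boldsymbol\nu) \in T(\eta)$,
\[
\ell(\lambda^{(k)}) = {\tt leftstrands}_\eta(k-1) + \ell(\nu_k^k) \qquad (1 \leq k \leq n).
\]
This can be read off directly from the gluing template in Figure~\ref{figure:firstglue}: $\lambda^{(k)}$ is assembled from $k-1$ horizontal bands of heights $s_{w^{(k)}(1)}^k(\eta), \ldots, s_{w^{(k)}(k-1)}^k(\eta)$ stacked above the partition $\nu_k^k$. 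Because $w^{(k)}$ is a permutation of $\{1,\ldots,k\}$ that fixes $k$, the sum of band heights is $\sum_{i=1}^{k-1} s_i^k(\eta) = {\tt leftstrands}_\eta(k-1)$, and $\nu_k^k$ contributes $\ell(\nu_k^k)$ further rows. Summing over $k$ and invoking Theorem~\ref{thm:bij} to match the $q$-weights then yields~(\ref{eqn:enriched}).

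The main point to verify is that each band row is a genuine positive part of $\lambda^{(k)}$, rather than a zero-length placeholder introduced by the rectangle notation, so that the band-plus-$\nu_k^k$ row count really equals $\ell(\lambda^{(k)})$. This is precisely what the Durfee rectangle analysis from the proof of Claim~\ref{claim:mutualInverses} provides: the cumulative band height $\sum_{i' \leq i} s_{w^{(k)}(i')}^k(\eta)$ equals the height $a_i^k$ of the Durfee rectangle $\delta_i^k \subseteq \lambda^{(k)}$, whose bottom-right corner lies on the boundary of $\lambda^{(k)}$; consequently the corresponding rows are positive parts of $\lambda^{(k)}$, and the band contribution augments $\ell(\lambda^{(k)})$ by $\sum_{i=1}^{k-1} s_{w^{(k)}(i)}^k(\eta)$ as claimed.
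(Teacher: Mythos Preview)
Your approach is essentially the paper's: you reinterpret each side of~(\ref{eqn:enriched}) as a bivariate generating series with $z$ tracking total length, and reduce everything to the length identity
\[
\ell(\lambda^{(k)}) \;=\; {\tt leftstrands}_\eta(k-1) + \ell(\nu_k^k),
\]
which the paper likewise reads directly off Figure~\ref{figure:firstglue}. You go a step further than the paper by isolating the one place this could fail---a band of height $s_{w^{(k)}(i)}^k(\eta)>0$ might consist of length-zero rows---and you try to rule it out via the Durfee-rectangle description from Claim~\ref{claim:mutualInverses}.

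That last step does not actually close the gap. The rectangle $\epsilon_i^k=\delta_i^k$ has width $b_i^k=\sum_{j>i} t_{w^{(k)}(j)}^k(\eta)$, and nothing forces this to be positive; when $b_i^k=0$ the bottom-right corner of $\delta_i^k$ sits at horizontal coordinate~$0$, so ``corner on the boundary of $\lambda^{(k)}$'' no longer implies that the first $a_i^k$ rows of $\lambda^{(k)}$ are nonzero. Concretely, take $n=2$, $\mathbf d=(2,1)$, and the class $\eta$ with one $[1,1]$-strand and one $[1,2]$-strand. Then $s_1^2=1$, $t_1^2=1$, $t_2^2=0$, so for $k=2$ the sole band in Figure~\ref{figure:firstglue} is $1\times 0$; choosing $\nu_1^2=\nu_2^2=\emptyset$ yields $\lambda^{(2)}=\emptyset$, while your formula predicts $\ell(\lambda^{(2)})=\ell(\nu_2^2)+s_1^2=1$. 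The paper's one-line justification of the length identity glosses over this same point, so your argument is no less complete than the published one---but the subtlety you correctly flagged is genuine, and your Durfee-rectangle fix does not dispose of it.
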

\begin{proof}
The lefthand side of (\ref{eqn:enriched}) is the generating series for $S$ with respect to the weight
that uses $q$ to mark the number of boxes and
$z$ to mark length of the partitions involved.
Now, suppose $\lambda^{(k)}$ is a partition of
${\boldsymbol \lambda}\in S$ of length $\ell$.
Under the indicated decomposition of Figure~\ref{figure:firstglue},
\[\ell=\ell(\nu_k^k)+\sum_{i=1}^{k-1}s_{w^{(k)}(i)}^k=\ell(\nu_k^k)+{\tt leftstrands}_{\eta(k-1)},\]
where the second equality holds by
(\ref{eqn:leftstrands}) and reordering terms. 
Here $\ell(\nu_k^k)$ is the length of $\nu_k^k$.
The corollary follows immediately from this and
Theorem~\ref{thm:bij} combined.
\end{proof}

Theorem~\ref{thm:main} is therefore the $z=1$
case of Corollary~\ref{cor:enriched}.
By analysis as in Example~\ref{exa:classical}, we obtain, in a special case this Durfee square identity:
\[\frac{1}{(z;q)_k}=\sum_{j=0}^{\infty}
z^j q^{j^2}{k\brack j}_q\frac{1}{(z;q)_j}.\]
In addition, following the argument of the Introduction,
from Corollary~\ref{cor:enriched} one can thereby
deduce an enriched form of M.~Reineke's identity.

\section{Proof of Theorem~\ref{prop:codim}}
\label{section:Reineke}

First we recall some more background on quiver representations.  Given ${\sf V}$ and ${\sf W}$ an {\bf extension} of $\sf V$ by $\sf W$ is a short exact sequence of morphisms
\[0\rightarrow \sf W\rightarrow \sf E\rightarrow \sf V\rightarrow 0.\]
Two extensions are {\bf equivalent} if the following diagram commutes:

\begin{center}
\includegraphics{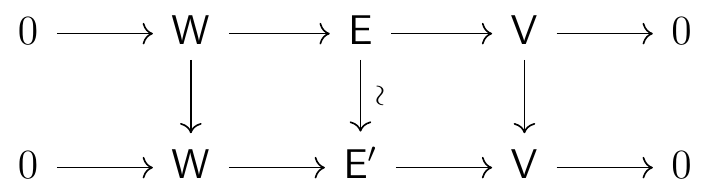}
\end{center}
Write ${\rm Ext}^1(\sf V,\sf W)$ for the space of extensions of $\sf V$ by $\sf W$ up to equivalence.

Each quiver has an associated {\bf Euler form} \[\chi_Q:\mathbb N^{Q_0}\times \mathbb N^{Q_0}\rightarrow \mathbb Z,\] 
defined by 
\begin{equation}
\label{eqn:eulerform}
\chi_Q({\mathbf d_1},{\mathbf d_2})=\sum_{x\in Q_0} {\mathbf d_1}(x){\mathbf d_2}(x)-\sum_{a\in Q_1} {\mathbf d_1}(t(a)){\mathbf d_2}(h(a)).
\end{equation}    Given representations $\sf V$ and $\sf W$ of $Q$, use the abbreviation:
\[\chi_Q({\sf V},{\sf W}):=\chi_Q({\bf dim}{\sf V},{\bf dim}{\sf W}).\]  
The Euler form relates morphisms and extensions as follows:
 \begin{equation}
\label{eqn:eulerhom}
\chi_Q(\sf V,\sf W)={\rm dim}{\rm Hom}(\sf V,\sf W)-{\rm dim}{\rm Ext}^1(\sf V,\sf W),
\end{equation}  
(see \cite[Corollary 1.4.3]{brion2008representations}).

Below, we let $a_x$ to refer to the arrow of the quiver whose left vertex is $x$.
Consider pairs of intervals $(I,J)$ of the following three types:

\begin{enumerate}
\item[(I)] $I=[w,x-1]$ and $J=[x,z]$ with $w<x\leq z$

\begin{tikzpicture}[x=.75cm,y=.75cm]
\filldraw [color=black,fill=black,thick](-1,1)circle(.1);
\filldraw [color=black,fill=black,thick](2,1)circle(.1);
\filldraw [color=black,fill=black,thick](3,0)circle(.1);
\filldraw [color=black,fill=black,thick](6,0)circle(.1);
\draw[dashed] (-1,1)--(2,1);
\draw[dashed] (3,0)--(6,0);
\node[] at (3,-.5) {$x$};
\node[] at (6,-.5) {$z$};
\node[] at (-1,1.5) {$w$};
\node[] at (2,1.5){$x-1$};
\end{tikzpicture}
\item[(II)] $I=[w,y]$ and $J=[x,z]$ with $w<x\leq y<z$ and the arrows $a_{x-1}$ and $a_y$ point in the same direction, e.g.,

\begin{tikzpicture}[x=.75cm,y=.75cm]
\filldraw [color=black,fill=black,thick](-1,1)circle(.1);
\filldraw [color=black,fill=black,thick](3,1)circle(.1);
\filldraw [color=black,fill=black,thick](2,0)circle(.1);
\filldraw [color=black,fill=black,thick](6,0)circle(.1);
\draw[dashed] (-1,1)--(3,1);
\draw[dashed] (2,0)--(6,0);
\node[] at (2,-.5) {$x$};
\node[] at (6,-.5) {$z$};
\node[] at (-1,1.5) {$w$};
\node[] at (3,1.5){$y$};
\filldraw [color=black,fill=black,thick](2,1)circle(.1);
\filldraw [color=black,fill=black,thick](1,1)circle(.1);
\filldraw [color=black,fill=black,thick](3,0)circle(.1);
\filldraw [color=black,fill=black,thick](4,0)circle(.1);
\draw[ decoration={markings, mark=at position 0.6 with {\arrow[scale=2]{>}}}, postaction={decorate}](1,1) -- (2,1);
\draw[ decoration={markings, mark=at position 0.6 with {\arrow[scale=2]{>}}}, postaction={decorate}](3,0) -- (4,0);
\end{tikzpicture}

\item[(III)] $I=[x,y]$ and $J=[w,z]$ with $w<x\leq y<z$ and the arrows $a_{x-1}$ and $a_{y}$ point in different directions, e.g.,

\begin{tikzpicture}[x=.75cm,y=.75cm]
\filldraw [color=black,fill=black,thick](2,1)circle(.1);
\filldraw [color=black,fill=black,thick](3,1)circle(.1);
\filldraw [color=black,fill=black,thick](-1,0)circle(.1);
\filldraw [color=black,fill=black,thick](2,0)circle(.1);
\filldraw [color=black,fill=black,thick](4,0)circle(.1);
\filldraw [color=black,fill=black,thick](3,0)circle(.1);
\filldraw [color=black,fill=black,thick](1,0)circle(.1);
\filldraw [color=black,fill=black,thick](6,0)circle(.1);
\draw[dashed] (-1,0)--(6,0);
\draw[dashed] (2,1)--(3,1);
\node[] at (-1,-.5) {$w$};
\node[] at (6,-.5) {$z$};
\node[] at (2,1.5) {$x$};
\node[] at (3,1.5){$y$};
\draw[ decoration={markings, mark=at position 0.6 with {\arrow[scale=2]{>}}}, postaction={decorate}](1,0) -- (2,0);
\draw[ decoration={markings, mark=at position 0.6 with {\arrow[scale=2]{<}}}, postaction={decorate}](3,0) -- (4,0);
\end{tikzpicture}
\end{enumerate}

Let \[{\tt ConditionStrands}=\{(I,J):  \text{ $(I,J)$ satisfies (I), (II), or (III)}\}.\] 
 We also let \[{\tt StrandPairs}=\{(I,J)=([x_1,x_2],[y_1,y_2]:x_2\leq y_2)\}.\]  
(From the definitions (I)-(III), it follows that ${\tt ConditionStrands}\subset {\tt StrandPairs}$.)

\begin{claim}
\label{claim:interval}
Fix intervals $I$ and $J$.  If $[x,y]\subseteq I,J$ then 
\begin{equation}
\label{eqn:eulerrestrict}
\sum_{i=x}^y{\mathbf d_I}(i){\mathbf d_J}(i)-\sum_{i=x}^{y-1}{\mathbf d_I}(t(a_i)){\mathbf d_J}(h(a_i))=1
\end{equation}
\end{claim}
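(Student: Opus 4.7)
The plan is a direct calculation, since the dimension vectors $\mathbf{d}_I$ and $\mathbf{d}_J$ are very simple: by the Gabriel construction recalled earlier, $\mathbf{d}_K(i) = 1$ if $i \in K$ and $0$ otherwise. First I would observe that because $[x,y] \subseteq I \cap J$, every factor in the first sum equals $1 \cdot 1 = 1$, so
\[
\sum_{i=x}^{y} \mathbf{d}_I(i)\mathbf{d}_J(i) = y - x + 1.
\]

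Next I would handle the second sum. The arrow $a_i$ (whose leftmost vertex is $i$, per the convention fixed in this section) has $\{t(a_i), h(a_i)\} = \{i, i+1\}$, regardless of orientation. For $i$ in the range $x \le i \le y-1$, both endpoints $i$ and $i+1$ lie in $[x,y] \subseteq I \cap J$, so $\mathbf{d}_I(t(a_i)) = \mathbf{d}_J(h(a_i)) = 1$. Hence
\[
\sum_{i=x}^{y-1} \mathbf{d}_I(t(a_i))\mathbf{d}_J(h(a_i)) = y - x.
\]

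Subtracting gives $(y-x+1) - (y-x) = 1$, as claimed. The main (and only) subtlety is checking that the formula is indeed independent of the orientations of the arrows $a_i$, which is immediate since both endpoints of each $a_i$ with $i \in [x,y-1]$ lie in $[x,y]$ and hence in both $I$ and $J$. I expect no real obstacle; the statement is essentially the observation that the Euler form $\chi_Q$ restricted to the subquiver supported on $[x,y]$, evaluated on the pair of trivially-one dimension vectors, computes the Euler characteristic of a type $A_{y-x+1}$ path, which equals $1$.
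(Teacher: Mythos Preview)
Your proof is correct and follows essentially the same approach as the paper: compute each sum directly using that $\mathbf{d}_I$ and $\mathbf{d}_J$ are identically $1$ on $[x,y]$, note that both endpoints of each $a_i$ with $x\le i\le y-1$ lie in $[x,y]$ regardless of orientation, and subtract $(y-x)$ from $(y-x+1)$. The only addition is your closing remark interpreting the result as an Euler characteristic, which is a pleasant gloss but not part of the paper's argument.
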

\begin{proof}
Since $[x,y]\subseteq I,J$, ${\mathbf d_I}(i)={\mathbf d_J}(i)=1$ for all $i\in[x,y]$.  Therefore, 
\begin{equation}
\label{eqn:vert}
\sum_{i=x}^y{\mathbf d_I}(i){\mathbf d_J}(i)=y-x+1.
\end{equation}

Regardless of the orientation of $a_i$, if $i\in[x,y-1]$ then $t(a_i),h(a_i)\in [x,y]$.  Because $[x,y]\subseteq I,J$, we have ${\mathbf d_I}(t(a_i))={\mathbf d_J}(h(a_i))=1$.  So 
\begin{equation}
\label{eqn:arrow}
\sum_{i=x}^{y-1}{\mathbf d_I}(t(a_i)){\mathbf d_J}(h(a_i))=(y-1)-x+1.
\end{equation}
Subtracting (\ref{eqn:arrow}) from (\ref{eqn:vert}) gives (\ref{eqn:eulerrestrict}).
\end{proof}

\begin{claim}
\label{claim:eulernegative} Let $(I,J)\in {\tt StrandPairs}$.  Then 
\[(I,J)\in {\tt ConditionStrands} \iff \chi_Q({\sf V}_I,{\sf V}_J)<0 \text{\ or $\chi_Q({\sf V}_J,{\sf V}_I)<0$.}\] 
Moreover,
\[(I,J)\in {\tt ConditionStrands} \Rightarrow \chi_Q(V_I,V_J)=-1  \text{\ or $\chi(V_J,V_I)=-1$.}\]
\end{claim}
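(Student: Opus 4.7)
The plan is to compute $\chi_Q({\sf V}_I, {\sf V}_J)$ directly from (\ref{eqn:eulerform}). Since $\mathbf{d}_I(x) = \mathbf{1}_{x \in I}$, this specializes to
\[\chi_Q({\sf V}_I, {\sf V}_J) = |I \cap J| - \#\{a \in Q_1 : t(a) \in I,\ h(a) \in J\}.\]
I would split the arrow count into \emph{interior} arrows $a_i$ with both endpoints $i, i+1 \in I \cap J$ and \emph{boundary} arrows (the rest). By Claim~\ref{claim:interval} applied to $[x,y] = I \cap J$ (when non-empty), the interior arrows contribute $|I \cap J| - 1$, so
\[\chi_Q({\sf V}_I, {\sf V}_J) = \mathbf{1}_{I \cap J \neq \emptyset} - B_{I \to J}, \qquad \chi_Q({\sf V}_J, {\sf V}_I) = \mathbf{1}_{I \cap J \neq \emptyset} - B_{J \to I},\]
where $B_{I \to J}$ (resp.\ $B_{J \to I}$) counts boundary arrows with tail in $I$, head in $J$ (resp.\ tail in $J$, head in $I$).

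Each boundary arrow, by its orientation, contributes to exactly one of $B_{I \to J}$ or $B_{J \to I}$. A direct inspection of how $I$ and $J$ can sit relative to one another (using $\max I \leq \max J$ from the ${\tt StrandPairs}$ hypothesis) shows $B_{I \to J}, B_{J \to I} \in \{0, 1, 2\}$, so $\chi_Q \geq -1$ always, and one of the two Euler forms equals $-1$ in exactly the following three scenarios:
\begin{itemize}
\item disjoint adjacent pairs: the single bridging arrow $a_{x-1}$ (where $x = \min J$) forces one side to $-1$, matching type (I);
\item properly overlapping pairs $\min I < \min J \leq \max I < \max J$: the two flanking arrows $a_{x-1}, a_y$ both contribute to the same side exactly when they are co-directed, matching type (II);
\item strictly nested pairs $\min J < \min I$ and $\max I < \max J$: the two flanking arrows $a_{x-1}, a_y$ both contribute to the same side exactly when they are anti-directed, matching type (III).
\end{itemize}
In every remaining configuration (non-adjacent disjoint, equal, or containments sharing one endpoint), at most one boundary arrow is available against a nonzero indicator, forcing $\chi_Q \geq 0$ in both directions.

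Combining the above gives the equivalence in the claim and simultaneously proves the ``moreover'' statement that the negative Euler value is always exactly $-1$. I expect the main hurdle to be careful orientation bookkeeping: at each boundary vertex one must determine which of $I \setminus J$, $J \setminus I$, or $I \cap J$ contains the tail and which contains the head of the relevant arrow, then align this with the direction convention specified in (I)--(III). Each subcase is routine but the full tabulation requires attention.
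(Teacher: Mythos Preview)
Your proposal is correct and follows essentially the same route as the paper: both arguments reduce the Euler form via Claim~\ref{claim:interval} to the contribution of the one or two ``flanking'' arrows at the ends of $I\cap J$ (or the single bridging arrow in the disjoint case), and then tabulate orientations. The only difference is organizational: you package this reduction as the single formula $\chi_Q({\sf V}_I,{\sf V}_J)=\mathbf{1}_{I\cap J\neq\emptyset}-B_{I\to J}$ and run one case analysis handling both implications at once, whereas the paper treats $(\Rightarrow)$ and $(\Leftarrow)$ separately and rederives this reduction inside each subcase. One small imprecision to fix when you write it out: a flanking arrow need not contribute to \emph{exactly} one of $B_{I\to J},B_{J\to I}$---when $I$ and $J$ share an endpoint it contributes to neither---so phrase that as ``at most one,'' consistent with your later ``remaining configurations'' paragraph.
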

\begin{proof}
Throughout, given an interval $I$, write $\mathbf d_I$ for the dimension vector of ${\sf V}_I$.  Applying (\ref{eqn:eulerform}), the definition of the Euler form, \[\chi_Q({\sf V}_I,{\sf V}_J)=\chi_Q({\mathbf d_I},{\mathbf d_J})=\sum_{i=1}^n{\mathbf d_I}(i){\mathbf d_J}(i)-\sum_{i=1}^{n-1}{\mathbf d_I}(t(a_i)){\mathbf d_J}(h(a_i)).\]  
We analyze this expression repeatedly throughout our argument.

$(\Rightarrow)$ By direct computation, we will show if $(I,J)\in {\tt ConditionStrands}$ then 
\[\chi_Q({\sf V}_I,{\sf V}_J)=-1 \text{ or }\chi_Q({\sf V}_J,{\sf V}_I)=-1,\]
which is the last assertion of the claim.

\noindent Case 1: $(I,J)=([w,x-1],[x,z])$ is of type (I).

\noindent Subcase i: $a_{x-1}$ points to the right.
\begin{align*}
\chi_Q({\sf V}_I,{\sf V}_J)&=\sum_{i=1}^n{\mathbf d_I}(i){\mathbf d_J}(i)-\sum_{i=1}^{n-1}{\mathbf d_I}(t(a_i)){\mathbf d_J}(h(a_i))\\
&=-\sum_{i=1}^{n-1}{\mathbf d_I}(t(a_i)){\mathbf d_J}(h(a_i)) \text{\ \ \ \ (since $I\cap J=\emptyset$)}\\
&=-{\mathbf d_I}(t(a_{x-1})){\mathbf d_J}(h(a_{x-1}))\\
&=-{\mathbf d_I}(x-1){\mathbf d_J}(x)\\
&=-1
\end{align*}
\noindent Subcase ii: $a_{x-1}$ points to the left.

Let $Q^{\rm op}$ be the quiver obtained by reversing the direction of all arrows in $Q$.  Then $\chi_Q({\mathbf d_J},{\mathbf d_I})=\chi_{Q^{\rm op}}({\mathbf d_I},{\mathbf d_J})$. Therefore, 
\[
\chi_Q({\sf V}_J,{\sf V}_I)=\chi_Q({\mathbf d_J},{\mathbf d_I})=\chi_{Q}^{\rm op}({\mathbf d_I},{\mathbf d_J})=-1\]
by Subcase 1.i.

\noindent Case 2: $(I,J)=([w,y],[x,z])$ is of type (II).

\noindent Subcase i: $a_{x-1}$ and $a_{y}$ point to the right.
\begin{align*}
\chi_Q({\sf V}_I,{\sf V}_J)&=\sum_{i=x}^y{\mathbf d_I}(i){\mathbf d_J}(i)-\sum_{i=x-1}^{y}{\mathbf d_I}(t(a_i)){\mathbf d_J}(h(a_i))\\
&=\left(\sum_{i=x}^y{\mathbf d_I}(i){\mathbf d_J}(i)-\sum_{i=x}^{y-1}{\mathbf d_I}(t(a_i)){\mathbf d_J}(h(a_i))\right)-{\mathbf d_I}(t(a_{x-1})){\mathbf d_J}(h(a_{x-1}))\\
&\quad-{\mathbf d_I}(t(a_y)){\mathbf d_J}(h(a_y))\\
&=1-{\mathbf d_I}(t(a_{x-1})){\mathbf d_J}(h(a_{x-1}))-{\mathbf d_I}(t(a_y)){\mathbf d_J}(h(a_y)) \quad \text{(Claim~\ref{claim:interval})}\\
&=1-{\mathbf d_I}(x-1){\mathbf d_J}(x)-{\mathbf d_I}(y){\mathbf d_J}(y+1)\\
&=-1
\end{align*}

\noindent Subcase ii: $a_{x-1}$ and $a_{y}$ point to the left.

$\chi_Q({\sf V}_J,{\sf V}_I)=-1$ by the $Q^{\rm op}$ argument, as in Subcase 1.i.

\noindent Case 3: $(I,J)=([x,y],[y,z])$ is of type (III).

\noindent Subcase i: $a_{x-1}$ points right and $a_{y}$ points left.
\begin{align*}
\chi_Q({\sf V}_I,{\sf V}_J)&=\sum_{i=x}^y{\mathbf d_I}(i){\mathbf d_J}(i)-\sum_{i=x-1}^{y}{\mathbf d_I}(t(a_i)){\mathbf d_J}(h(a_i))\\
&=\left(\sum_{i=x}^y{\mathbf d_I}(i){\mathbf d_J}(i)-\sum_{i=x}^{y-1}{\mathbf d_I}(t(a_i)){\mathbf d_J}(h(a_i))\right)-{\mathbf d_I}(t(a_{x-1})){\mathbf d_J}(h(a_{x-1}))\\
&\quad-{\mathbf d_I}(t(a_y)){\mathbf d_J}(h(a_y))\\
&=1-{\mathbf d_I}(t(a_{x-1})){\mathbf d_J}(h(a_{x-1}))-{\mathbf d_I}(t(a_y)){\mathbf d_J}(h(a_y)) \quad \text{ (Claim~\ref{claim:interval})}\\
&=1-{\mathbf d_I}(x-1){\mathbf d_J}(x)-{\mathbf d_I}(y-1){\mathbf d_J}(y)\\
&=-1
\end{align*}
\noindent Subcase ii: $a_{x-1}$ points left and $a_{y}$ points right.

$\chi_Q({\sf V}_J,{\sf V}_I)=-1$ by the $Q^{\rm op}$ argument, as in Subcase 1.i.

$(\Leftarrow)$
Let $(I,J)=([x_1,x_2],[y_1,y_2])\in{\tt StrandPairs}$ and first assume $\chi_Q({\sf V}_I,{\sf V}_J)<0$.  

\noindent Case 1: $I\cap J=\emptyset$.
Then ${\mathbf d_I}(i)=0$ or ${\mathbf d_J}(i)=0$ for all $i\in[1,n]$ and so
\[\chi_Q({\mathbf d_I},{\mathbf d_J})=-\sum_{i=1}^{n-1}{\mathbf d_I}(t(a_i)){\mathbf d_J}(h(a_i)).\]
Since  $\chi_Q({\mathbf d_I},{\mathbf d_J})<0$ there must exist an arrow $a_i$ with $t(a_i)\in [x_1,x_2]$ and $h(a_i)\in [y_1,y_2]$.  Then $i=x_2$, $a_i$ points to the right, and $y_1=x_2+1$.  This implies $(I,J)$ is of type (I).  

\noindent Case 2: Assume $I\cap J\neq \emptyset$.  Since we assume $x_2\leq y_2$ \[I\cap J=[x_1,x_2]\cap[y_1,y_2]=[z,x_2]\] 
where $z\in\{x_1,y_1\}$.  Then
\begin{align*}
\chi_Q({\mathbf d_I},{\mathbf d_J})&=\sum_{i=1}^n {\mathbf d_I}(i){\mathbf d_J}(i)-\sum_{i=1}^{n-1}{\mathbf d_I}(t(a_i)){\mathbf d_J}(h(a_i))\\
&=\sum_{i=z}^{x_2}{\mathbf d_I}(i){\mathbf d_J}(i)-\sum_{i=z-1}^{x_2} {\mathbf d_I}(t(a_i)){\mathbf d_J}(h(a_i)) \text{ \ \ \ (Claim~\ref{claim:interval})}\\
&=1-{\mathbf d_I}(t(a_{z-1})){\mathbf d_J}(h(a_{z-1}))-{\mathbf d_I}(t(a_{x_2})){\mathbf d_J}(h(a_{x_2})).
\end{align*}
Since $\chi_Q({\mathbf d_I},{\mathbf d_J})<0$, we must have \[{\mathbf d_I}(t(a_{z-1}))={\mathbf d_J}(h(a_{z-1}))={\mathbf d_I}(t(a_{x_2}))={\mathbf d_J}(h(a_{x_2}))=1.\]  
Therefore, 
\begin{equation}
\label{eqn:tail}
t(a_{z-1}),t(a_{x_2})\in I=[x_1,x_2]
\end{equation} and 
\begin{equation}
\label{eqn:head}
h(a_{z-1}),h(a_{x_2})\in J=[y_1,y_2].
\end{equation}
If an arrow $a_i$ points to the right, then $h(a_i)=i+1$ and $t(a_i)=i$.  If $a_i$ points left, $h(a_i)=i$ and $t(a_i)=i+1$.  We proceed by analyzing the direction of $a_{x_2}$ and $a_{z-1}$  . 
First consider $a_{x_2}$.  If $a_{x_2}$ points left, then $t(a_{x_2})=x_2+1$ and so  $x_2+1\in[x_1,x_2]$, which is a contradiction.  Therefore, we may assume $a_{x_2}$ points right.  

Now consider the direction of $a_{z-1}$.  

If $a_{z-1}$ points to the right, then $t(a_{z-1})=z-1\in [x_1,x_2]$ by $(\ref{eqn:tail})$ and so $z>x_1$.  Since $z\in\{x_1,y_1\}$, we must have $z=y_1$.

\begin{center}
\begin{tikzpicture}[x=.75cm,y=.75cm]
\filldraw [color=black,fill=black,thick](-1,1)circle(.1);
\filldraw [color=black,fill=black,thick](3,1)circle(.1);
\filldraw [color=black,fill=black,thick](2,0)circle(.1);
\filldraw [color=black,fill=black,thick](6,0)circle(.1);
\draw[dashed] (-1,1)--(3,1);
\draw[dashed] (2,0)--(6,0);
\node[] at (2,-.5) {$z=y_1$};
\node[] at (6,-.5) {$y_2$};
\node[] at (-1,1.5) {$x_1$};
\node[] at (3,1.5){$x_2$};
\filldraw [color=black,fill=black,thick](2,1)circle(.1);
\filldraw [color=black,fill=black,thick](1,1)circle(.1);
\filldraw [color=black,fill=black,thick](3,0)circle(.1);
\filldraw [color=black,fill=black,thick](4,0)circle(.1);
\draw[ decoration={markings, mark=at position 0.6 with {\arrow[scale=2]{>}}}, postaction={decorate}](1,1) -- (2,1);
\draw[ decoration={markings, mark=at position 0.6 with {\arrow[scale=2]{>}}}, postaction={decorate}](3,0) -- (4,0);
\end{tikzpicture}
\end{center}
Therefore $(I,J)$ is of type (II).

 If $a_{z-1}$ points left, now we have by (\ref{eqn:head})  $h(a_{z-1})=z-1\in[y_1,y_2]$.   Therefore $z-1>y_1$ and so $z\neq y_1$ which implies $z=x_1$. Hence we have:
\begin{center}
\begin{tikzpicture}[x=.75cm,y=.75cm]
\filldraw [color=black,fill=black,thick](2,1)circle(.1);
\filldraw [color=black,fill=black,thick](3,1)circle(.1);
\filldraw [color=black,fill=black,thick](-1,0)circle(.1);
\filldraw [color=black,fill=black,thick](2,0)circle(.1);
\filldraw [color=black,fill=black,thick](4,0)circle(.1);
\filldraw [color=black,fill=black,thick](3,0)circle(.1);
\filldraw [color=black,fill=black,thick](1,0)circle(.1);
\filldraw [color=black,fill=black,thick](6,0)circle(.1);
\draw[dashed] (-1,0)--(6,0);
\draw[dashed] (2,1)--(3,1);
\node[] at (-1,-.5) {$y_1$};
\node[] at (6,-.5) {$y_2$};
\node[] at (1.5,1.5) {$z=x_1$};
\node[] at (3,1.5){$x_2$};
\draw[ decoration={markings, mark=at position 0.6 with {\arrow[scale=2]{>}}}, postaction={decorate}](1,0) -- (2,0);
\draw[ decoration={markings, mark=at position 0.6 with {\arrow[scale=2]{<}}}, postaction={decorate}](3,0) -- (4,0);
\end{tikzpicture}
\end{center}
So $(I,J)$ is of type (III).

By near identical arguments, $\chi_Q({\mathbf d_J},{\mathbf d_I})$ is negative when 
\begin{enumerate}
\item $a_{z-1}$ and $a_{x_2}$ both point left, $z=y_1$, and $x_2<y_2$; i.e., $(I,J)$ is of type (II)
\item $a_{z-1}$ points right, $a_{x_2}$ points left, $z=x_1$ and $x_2<y_2$ so $(I,J)$ is of type (III).
\end{enumerate}

\end{proof}

\begin{proposition}
\label{prop:first1}
\[{\rm codim}_\mathbb C\eta=\sum_{(I,J)\in{\tt ConditionStrands}}m_Im_J\]
\end{proposition}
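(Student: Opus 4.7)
The plan is a three-step reduction: (i) identify $\operatorname{codim}_{\mathbb C}(\mathcal O_\eta)$ with $\dim \operatorname{Ext}^1(V_\eta, V_\eta)$ via orbit-stabilizer and the Euler form; (ii) decompose the latter by Krull--Schmidt; (iii) match the surviving terms to ${\tt ConditionStrands}$ using Claim~\ref{claim:eulernegative}.

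For step (i), the stabilizer of $V_\eta$ under the ${\sf GL}_Q(\mathbf d)$-action is $\operatorname{Aut}(V_\eta)$, which is open in $\operatorname{End}(V_\eta) = \operatorname{Hom}(V_\eta, V_\eta)$. Hence $\dim \mathcal O_\eta = \dim {\sf GL}_Q(\mathbf d) - \dim \operatorname{Hom}(V_\eta, V_\eta)$. The Euler form satisfies $\chi_Q(\mathbf d, \mathbf d) = \dim {\sf GL}_Q(\mathbf d) - \dim \operatorname{Rep}_Q(\mathbf d)$ by (\ref{eqn:eulerform}), so combined with (\ref{eqn:eulerhom}) this yields
\[\operatorname{codim}_{\mathbb C}(\mathcal O_\eta) = \dim \operatorname{Hom}(V_\eta, V_\eta) - \chi_Q(V_\eta, V_\eta) = \dim \operatorname{Ext}^1(V_\eta, V_\eta).\]
For step (ii), Krull--Schmidt (\ref{eqn:Krull}) gives $V_\eta = \bigoplus_I {\sf V}_I^{\oplus m_I(\eta)}$, so bilinearity of $\operatorname{Ext}^1$ yields
\[\dim \operatorname{Ext}^1(V_\eta, V_\eta) = \sum_{I, J \in \Phi^+} m_I(\eta)\, m_J(\eta)\, \dim \operatorname{Ext}^1({\sf V}_I, {\sf V}_J).\]

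For step (iii), I would use the standard property of the type $A_n$ quiver: for indecomposables ${\sf V}_I, {\sf V}_J$, both $\dim \operatorname{Hom}({\sf V}_I, {\sf V}_J)$ and $\dim \operatorname{Ext}^1({\sf V}_I, {\sf V}_J)$ lie in $\{0, 1\}$ and are not simultaneously nonzero when $I \neq J$. Consequently $\operatorname{Ext}^1({\sf V}_I, {\sf V}_I) = 0$ (type-$A$ indecomposables are rigid) and
\[\dim \operatorname{Ext}^1({\sf V}_I, {\sf V}_J) = \max(-\chi_Q({\sf V}_I, {\sf V}_J), 0).\]
Grouping the sum by unordered pairs $\{I, J\}$ with $I \neq J$, each unordered pair contributes $m_I m_J\bigl(\dim \operatorname{Ext}^1({\sf V}_I, {\sf V}_J) + \dim \operatorname{Ext}^1({\sf V}_J, {\sf V}_I)\bigr)$. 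Ordering so that $x_2 \leq y_2$, Claim~\ref{claim:eulernegative} shows that at least one of the two Euler characteristics is negative if and only if $(I, J) \in {\tt ConditionStrands}$, in which case exactly one of them equals $-1$. Hence each pair in ${\tt ConditionStrands}$ contributes exactly $m_I m_J$ and all other pairs contribute $0$, proving the identity.

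The main obstacle is the orthogonality assertion used in step (iii): for non-isomorphic indecomposables ${\sf V}_I, {\sf V}_J$ of a type $A_n$ quiver, the vanishing of $\operatorname{Ext}^1$ whenever $\chi_Q \geq 0$ goes beyond the formal identity $\dim \operatorname{Hom} - \dim \operatorname{Ext}^1 = \chi_Q$ and relies on the hereditary Dynkin structure of $Q$. This refinement can be established by direct inspection of morphism and extension spaces between thin interval representations, in parallel with the case analysis already carried out for Claim~\ref{claim:eulernegative}; alternatively, one may quote it from the standard theory of representations of Dynkin quivers.
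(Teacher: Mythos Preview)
Your argument is correct and follows the same three-step route as the paper: reduce to $\dim\operatorname{Ext}^1(V_\eta,V_\eta)$, expand via Krull--Schmidt, and then match the nonzero summands to ${\tt ConditionStrands}$ via Claim~\ref{claim:eulernegative}. The paper packages your step~(i) as Voigt's Lemma (cited rather than re-derived), and in step~(iii) it invokes Reineke's total order on $\Phi^+$, which yields $\operatorname{Hom}({\sf V}_I,{\sf V}_J)=0$ and $\operatorname{Ext}^1({\sf V}_J,{\sf V}_I)=0$ for $I<J$; this simultaneously gives the formula $\dim\operatorname{Ext}^1({\sf V}_I,{\sf V}_J)=-\chi_Q({\sf V}_I,{\sf V}_J)$ in the relevant direction and the fact that at most one of $\chi_Q({\sf V}_I,{\sf V}_J),\ \chi_Q({\sf V}_J,{\sf V}_I)$ is negative.

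One small point to tighten: your assertion that ``exactly one of them equals $-1$'' does not follow from the Hom/Ext orthogonality you state (which concerns a single ordered pair); you also need that $\operatorname{Ext}^1({\sf V}_I,{\sf V}_J)$ and $\operatorname{Ext}^1({\sf V}_J,{\sf V}_I)$ are not both nonzero. This is exactly what Reineke's order supplies, and it can equally be read off the case split in the proof of Claim~\ref{claim:eulernegative} (the subcases on arrow direction are mutually exclusive). Since you already flag this region as the main obstacle and propose to handle it by direct inspection or by citing standard Dynkin theory, the gap is cosmetic rather than substantive.
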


\begin{proof}

There exists a total order on $\Phi^+$
\begin{equation}
\label{eqn:totalordertalk}
{\rm Hom}({\sf V}_{I},{\sf V}_{J}) \text{ and } {\rm Ext}^1({\sf V}_{J},{\sf V}_{I})=0
\text{\ whenever $I<J$ and $I\neq J$,}
\end{equation}
(see \cite{reineke2001feigin}, Section 2).
Using this ordering and (\ref{eqn:eulerhom}), it follows that
\begin{equation}
\label{eqn:eulerorder}
\text{ if $I<J$, then $\chi_Q({\sf V}_{I},{\sf V}_{J})\leq 0$ and  $\chi_Q({\sf V}_{J},{\sf V}_{I})\geq 0$.}
\end{equation}

Voigt's Lemma  (see \cite[Lemma 2.3]{ringel1980rational}) asserts 
\[{\rm codim}_\mathbb C \eta={\rm dim}{\rm Ext}^1({\sf V}_\eta,{\sf V}_\eta).\]
  Furthermore, indecomposables for Dynkin quivers have no self extensions, that is \[{\rm Ext}^1({\sf V}_I,{\sf V}_I)=0 \text{ for all } I\in \Phi^+.\]  So writing \[{\sf V}_\eta\cong \bigoplus_{I\in \Phi^+} {\sf V}_{I}^{\oplus m_I}\] as a finite direct sum of indecomposables, we have

\[{\rm Ext}^1({\sf V}_\eta,{\sf V}_\eta)\cong \bigoplus_{I<J} {\rm Ext}^1({\sf V}_{I},{\sf V}_{J})^{\oplus m_I m_J}\]
and so
\[{\rm codim}_\mathbb C\eta=\sum_{I<J} m_Im_J {\rm dim} {\rm Ext}^1({\sf V}_{I},{\sf V}_{J}),\] (see \cite{rimanyi2013cohomological}).  
Combining (\ref{eqn:eulerhom}) and (\ref{eqn:totalordertalk}) gives 
\begin{align}
\label{eqn:korea123}
{\rm codim}_\mathbb C\eta&=-\sum_{I<J} m_Im_J \chi_Q({\sf V}_{I},{\sf V}_{J}).
\end{align}

We will now re-express (\ref{eqn:korea123}).   
Let \[S=\{(I,J):I<J \text{ and } \chi_Q({\sf V}_{I},{\sf V}_{J})< 0  \},\] 
\[S_1=\{(I,J)=([x_1,x_2],[y_1,y_2]): (I,J)\in S \text{ and } x_2\leq y_2\}, \text{ and }\] 
\[S_2=\{(I,J)=([x_1,x_2],[y_1,y_2]):(I,J)\in S  \text{ and } x_2> y_2\}.\] 
Trivially, $S=S_1	\sqcup S_2$.
Let \[\widetilde{S_2}=\{(J,I):(I,J)\in S_2\}.\]  
\begin{claim}
\label{claim:toronto456}
${\tt ConditionStrands}=S_1\sqcup \widetilde{S_2}.$  
\end{claim}
\begin{proof}
$S_1\cap \widetilde{S_2}=\emptyset$, since $(I,J)\in S_1$ implies $I<J$ and $(I,J)\in \widetilde{S_2}$ implies $I>J$.

$(\subseteq)$  If $(I,J)\in {\tt ConditionStrands}$, by Claim~\ref{claim:eulernegative},  $\chi_Q({\sf V}_{I},{\sf V}_{J})< 0$ or  $\chi_Q({\sf V}_{J},{\sf V}_{I})< 0$.  In the first case,
from the definition, $(I,J)\in S_1$.  In the second case, again by definition, $(J,I)\in S_2$, which implies $(I,J)\in \widetilde S_2$.

$(\supseteq)$
We have $S_1,\widetilde{S_2}\subseteq {\tt StrandPairs}$.
Thus by Claim~\ref{claim:eulernegative}, $S_1,\widetilde{S_2}\subseteq{\tt ConditionStrands}$.
\end{proof}
Continuing from (\ref{eqn:korea123}),
\begin{align*}
{\rm codim}_\mathbb C\eta&=-\sum_{(I,J)\in S}m_Im_J\chi_Q({\sf V}_{I},{\sf V}_{J})\\
&=-\sum_{(I,J)\in S_1}m_Im_J\chi_Q({\sf V}_{I},{\sf V}_{J})-\sum_{(I,J)\in S_2}m_Im_J\chi_Q({\sf V}_{I},{\sf V}_{J})\\
&=-\sum_{(I,J)\in S_1}m_Im_J\chi_Q({\sf V}_{I},{\sf V}_{J})-\sum_{(I,J)\in \widetilde{S_2}}m_Im_J\chi_Q({\sf V}_{J},{\sf V}_{I})\\
&=\sum_{(I,J)\in S_1}m_Im_J+\sum_{(I,J)\in \widetilde{S_2}}m_Im_J
\text{ \ \ (Claim~\ref{claim:eulernegative})}\\
&=\sum_{(I,J)\in{\tt ConditionStrands}}m_Im_J \text{ \ \ (Claim~\ref{claim:toronto456}),}
\end{align*}
as claimed.
\end{proof}

Let 
\begin{equation}
\label{eqn:boxDef}
{\tt BoxStrands}=\{([w^{(k)}(i),k-1],[w^{(k)}(j),\ell]):1\leq i<j\leq k\leq \ell \leq n)\}.  
\end{equation}
(By definition, if $(I,J)=([w^{(k)}(i),k-1],[w^{(k)}(j),\ell]\in {\tt BoxStrands}$ then $k-1\leq \ell$, and so $(I,J)\in {\tt StrandPairs}$.  Thus ${\tt BoxStrands}\subset {\tt StrandPairs}$.)

\begin{proposition}
\label{prop:second1}
\[r_{\mathbf w}(\eta)=\sum_{(I,J)\in {\tt BoxStrands}} m_{I}m_{J}.\]\end{proposition}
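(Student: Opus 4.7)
The approach is a direct expansion of both sides in terms of the multiplicities $m_{[i,j]}(\eta)$. From the proof of Corollary~\ref{cor:cancel} we already have $s_i^k(\eta) = m_{[i,k-1]}(\eta)$ and the telescoping identity $t_i^k(\eta) + s_i^k(\eta) = t_i^{k-1}(\eta)$; iterating the latter down to $t_i^n(\eta) = m_{[i,n]}(\eta)$ yields the closed form
\[
t_i^k(\eta) \;=\; \sum_{\ell=k}^n m_{[i,\ell]}(\eta) \qquad (i \leq k).
\]
(This is also the definition read directly: a strand of type $[i,\ell]$ uses a vertex of column $k$ exactly when $i \leq k \leq \ell$.)

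Plugging these two formulas into the definition of $r_{\mathbf w}(\eta)$, and using that $w^{(k)}\in\mathfrak S_k$ satisfies $w^{(k)}(k)=k$ (so that $w^{(k)}(i)\leq k-1$ whenever $i<k$, making $[w^{(k)}(i),k-1]$ a legitimate interval), I obtain
\[
r_{\mathbf w}(\eta) \;=\; \sum_{k=2}^n\,\sum_{1\leq i<j\leq k}\,\sum_{\ell=k}^n m_{[w^{(k)}(i),k-1]}(\eta)\,m_{[w^{(k)}(j),\ell]}(\eta).
\]

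To finish, I would verify that the assignment
\[
(i,j,k,\ell) \;\longmapsto\; \bigl([w^{(k)}(i),k-1],\,[w^{(k)}(j),\ell]\bigr)
\]
is a bijection from the index set $\{(i,j,k,\ell):1\leq i<j\leq k\leq \ell\leq n\}$ onto ${\tt BoxStrands}$ as defined in (\ref{eqn:boxDef}). Surjectivity is by definition, and injectivity is clear because a pair of intervals in ${\tt BoxStrands}$ determines $k$ and $\ell$ from the two right endpoints, and then, since $w^{(k)}$ is a permutation of $\{1,\dots,k\}$, one recovers $i$ and $j$ as the $w^{(k)}$-preimages of the two left endpoints. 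Under this relabeling, the quadruple sum above is exactly $\sum_{(I,J)\in{\tt BoxStrands}} m_I(\eta)m_J(\eta)$.

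I do not expect any real obstacle here; the proof is essentially notational bookkeeping. In contrast to Proposition~\ref{prop:first1}, which required invoking the Euler form and a case analysis over arrow orientations, Proposition~\ref{prop:second1} reduces to a direct rewriting of the Durfee statistic once the expressions for $s_i^k(\eta)$ and $t_i^k(\eta)$ in terms of the $m_{[i,j]}(\eta)$ are in hand. The only point deserving a line of justification is the bijectivity of the indexing map, which uses $w^{(k)}(k)=k$ to guarantee that $[w^{(k)}(i),k-1]$ is non-degenerate.
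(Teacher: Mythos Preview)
Your proposal is correct and follows essentially the same route as the paper: substitute $s_{w^{(k)}(i)}^k(\eta)=m_{[w^{(k)}(i),k-1]}$ and $t_{w^{(k)}(j)}^k(\eta)=\sum_{\ell=k}^n m_{[w^{(k)}(j),\ell]}$ into the definition of $r_{\mathbf w}(\eta)$ and recognize the resulting quadruple sum as the sum over ${\tt BoxStrands}$. The only differences are cosmetic---the paper reads the formula for $t_i^k$ directly from its defining description rather than via telescoping, and it does not spell out the bijectivity of the indexing map, which you do.
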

\begin{proof}
By definition (\ref{eqn:durfeeStatistic}), 
\[r_{\mathbf w}(\eta)=\sum_{k=2}^n\sum_{1\leq i<j\leq k} s_{w^{(k)}(i)}^k(\eta) t_{w^{(k)}(j)}^k(\eta).\]

By definition, $t_{w^{(k)}(j)}^k(\eta)$ counts the number of strands in $\eta$ starting at $w^{(k)}(j)$ and using a vertex in column $k$.  So 
\[t_{w^{(k)}(j)}^k(\eta)=\sum_{\ell=k}^n m_{[w^{(k)}(j),\ell]}.\]  
Also, 
\[s_{w^{(k)}(i)}^k(\eta)=m_{[w^{(k)}(i),k-1]}.\]  Making these substitutions,
\begin{align*}
r_{\mathbf w}(\eta)&=\sum_{k=2}^n\sum_{1\leq i<j\leq k} m_{[w^{(k)}(i),k-1]}\left(\sum_{\ell=k}^n m_{w^{(k)}(j),\ell}\right)\\
&=\sum_{1\leq i<j\leq k\leq \ell\leq n}  m_{[w^{(k)}(i),k-1]}m_{[w^{(k)}(j),\ell]}\\
&=\sum_{(I,J)\in{\tt BoxStrands}}m_Im_J. \qedhere
\end{align*}

\end{proof}

It remains to prove
\begin{lemma}
\label{lemma:bigfinale}
${\tt BoxStrands}={\tt ConditionStrands}$.
\end{lemma}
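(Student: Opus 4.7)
The plan is to prove this set equality by translating the combinatorial descriptions on each side into the same condition on arrow orientations, mediated by the recursive structure of $\mathbf{w}_Q$. The key observation is that the one-line notation of $w_Q^{(k)}$ is built from that of $w_Q^{(k-1)}$ by either copying or reversing it before appending $k$, with reversal occurring precisely when $a_{k-2}$ and $a_{k-1}$ point in opposite directions; in particular $w_Q^{(k)}(k) = k$. Thus the relative order of any two values $a, b < k$ in $w_Q^{(k)}$ is controlled entirely by how many reversals separate them.

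The central lemma I would establish first is: for $1 \leq a < b < k$, the value $a$ appears before $b$ in the one-line notation of $w_Q^{(k)}$ if and only if $a_{b-1}$ and $a_{k-1}$ point in the same direction. The proof is a direct induction on $k$: when $b$ is first appended at step $b$ it lands at the end, with $a$ already to its left. Each subsequent step $m-1 \to m$ (for $b < m \leq k$) flips the relative order of $a$ and $b$ precisely when $a_{m-2}$ and $a_{m-1}$ disagree. The total parity of such flips therefore equals the parity of sign changes in the sequence of directions of $a_{b-1}, a_{b}, \ldots, a_{k-1}$, which is even if and only if $a_{b-1}$ and $a_{k-1}$ agree.

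With this lemma in hand, the inclusion ${\tt BoxStrands} \subseteq {\tt ConditionStrands}$ proceeds by case analysis on $(I, J) = ([w_Q^{(k)}(i), k-1], [w_Q^{(k)}(j), \ell])$ with $i < j \leq k \leq \ell$. If $j = k$ then $J = [k, \ell]$ is disjoint from and adjacent to $I$, giving Type (I). If $j < k$ and $w_Q^{(k)}(i) < w_Q^{(k)}(j)$, take $w = w_Q^{(k)}(i)$, $x = w_Q^{(k)}(j)$, $y = k-1$, $z = \ell$: then $w < x \leq y < z$, and applying the lemma with $a = w$, $b = x$ (both less than $k$) shows that $a_{x-1}$ and $a_y = a_{k-1}$ point in the same direction, so $(I, J)$ is Type (II). If instead $w_Q^{(k)}(j) < w_Q^{(k)}(i)$, the same rearrangement with the roles of $w$ and $x$ swapped yields Type (III), since the lemma now reports that $a_{x-1}$ and $a_y$ disagree.

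For ${\tt ConditionStrands} \subseteq {\tt BoxStrands}$ I will invert these constructions. For a Type (I) pair $([w, x-1], [x, z])$, set $k = x$, $\ell = z$, $j = k$, and choose $i$ with $w_Q^{(k)}(i) = w$ (possible since $w < k$ and $w_Q^{(k)}$ is a bijection fixing $k$). For a Type (II) pair $([w, y], [x, z])$, set $k = y+1$, $\ell = z$, and choose $i, j$ with $w_Q^{(k)}(i) = w$ and $w_Q^{(k)}(j) = x$; the lemma combined with the same-direction hypothesis forces $i < j$, and $j < k$ follows from $x < k$. Type (III) is analogous using the opposite-direction hypothesis. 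The main obstacle is really the proof of the lemma, which amounts to carefully tracking the parity of reversals through the recursion; once that is set up, all remaining case analysis is routine.
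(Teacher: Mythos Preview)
Your proof is correct and takes a genuinely different route from the paper's. The paper argues the set equality directly by induction on $k$: for a candidate pair $(I,J)=([x,k-1],[y,\ell])$ it compares membership in the two sets with that of the shorter pair $(I',J)=([x,k-2],[y,\ell])$, and splits into eight subcases depending on whether $a_{k-2},a_{k-1}$ agree and on the relative positions of $x$ and $y$. The recursive definition of $w_Q^{(k)}$ is invoked repeatedly inside these subcases to convert ``$i<j$ at level $k$'' into ``$i<j$ (or $i>j$) at level $k-1$''.

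You instead isolate all of the permutation combinatorics into one clean statement: for $a<b<k$, the value $a$ precedes $b$ in $w_Q^{(k)}$ if and only if $a_{b-1}$ and $a_{k-1}$ point the same way. The induction lives entirely inside this lemma (as a parity-of-reversals count), and once it is established the set equality follows by a short, non-inductive three-case check for each inclusion. This is more modular and avoids the paper's eight-subcase bookkeeping; the paper's approach, in turn, never needs to formulate or prove the closed-form order criterion explicitly. Both arguments ultimately rest on the same recursion, but yours packages it more efficiently.
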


\begin{proof}

Let $(I,J)$ be as follows:
\begin{equation}
\label{eqn:inthisform}
(I,J):=([x,k-1],[y,\ell]), \text{\ with $x\neq y$, $k\leq \ell$.}
\end{equation}
\begin{claim}
All elements of {\tt BoxStrands} and {\tt ConditionStrands} may be written in the form (\ref{eqn:inthisform}).
\end{claim}
\begin{proof}
If 
\[([w^{(k)}(i),k-1],[w^{(k)}(j),\ell])\in {\tt Boxstrands},\] 
then 
\[w^{(k)}(i)\neq w^{(k)}(j) \text{\ and $k\leq \ell$.}\]  
Hence we are done here by setting $x=w^{(k)}(i)$ and $y=w^{(k)}(j)$.

 On the other hand, suppose 
 \[([x_1,x_2],[y_1,y_2])\in {\tt ConditionStrands}.\] By definition (I)-(III), $x_1\neq y_1$ and $x_2<y_2$.  So set $x=x_1$, $y=y_1$, $k=x_2+1$ and $\ell=y_2$.
\end{proof}

\begin{claim} 
\label{claim:trivIntersect}
Let $(I,J)$ be as in (\ref{eqn:inthisform}) and suppose $I\cap J=\emptyset$.  Then $(I,J)\in {\tt BoxStrands}$ if and only if $(I,J)\in {\tt ConditionStrands}$.
\end{claim}
\begin{proof}
If $(I,J)\in{\tt ConditionStrands}$, then by the disjointness hypothesis it must be of type (I), i.e. of the form \[([x,k-1],[k,\ell]).\]
Now, since $x\leq k-1$ and as $w^{(k)}\in \mathfrak S_k$ and $w^{(k)}(k)=k$ there exists $i<k$ such that $w^{(k)}(i)=x$.  So 
\[([x,k-1],[k,\ell])=([w^{(k)}(i),k-1],[w^{(k)}(k),\ell])\in{\tt BoxStrands}.\]
 Conversely, assume 
 \[(I,J)=([w^{(k)}(i),k-1],[w^{(k)}(j),\ell])\in {\tt BoxStrands}\] 
and $I\cap J=\emptyset$.  Then $w^{(k)}(j)>k-1$ which means $w^{(k)}(j)=k$ and $j=k$ by the definition of $w^{(k)}$.  Furthermore, $w^{(k)}(i)\leq k-1$ since $i<j=k$.  So 
 \[(I,J)=([w^{(k)}(i),k-1],[k,\ell])\in {\tt ConditionStrands}\] is type (I).
\end{proof}

\begin{claim}
Let $(I,J)$ be as in (\ref{eqn:inthisform}).  Then $(I,J)\in {\tt BoxStrands} \Leftrightarrow (I,J)\in {\tt ConditionStrands}$.
\end{claim}

\begin{proof}

We will proceed by induction on $k\geq 2$. In the base case $k=2$, we must have $x=1$ and so $y\geq 2$.  As such, $I\cap J=\emptyset$ and so we are done Claim~\ref{claim:trivIntersect}.  
Fix $k> 2$ and assume the claim holds for $k-1$.  That is, given a pair of intervals $([x',k-2],[y',\ell'])$ so that $x',y'$ and $\ell'$ satisfy $x'\neq y'$ and $k-1\leq \ell'$ we have  
\begin{equation}
\label{eqn:inductionhyp}
([x',k-2],[y',\ell'])\in {\tt BoxStrands} \Leftrightarrow ([x',k-2],[y',\ell'])\in {\tt ConditionStrands}.
\end{equation}
Now let $(I,J)$ be as in (\ref{eqn:inthisform}), i.e., 
\[(I,J)=([x,k-1],[y,\ell]), \text{\ with $x\neq y$, $k\leq \ell$.}\]  
Again, by Claim~\ref{claim:trivIntersect}, if $I\cap J=\emptyset$ we are done, so assume $I\cap J\neq \emptyset$.  Then $y<k$. 

Now, since $1\leq x,y\leq k$, there exist $i$ and $j$ such that
\[1\leq i,j\leq k  \text{\ with $x=w^{(k)}(i)$ and $y=w^{(k)}(j)$.}\]  
So from (\ref{eqn:boxDef})
\begin{equation}
\label{eqn:boxiff}
(I,J)=([w^{(k)}(i),k-1],[w^{(k)}(j),\ell])\in {\tt BoxStrands}  \iff i<j.
\end{equation} Throughout, when $x\leq k-2$ we write $I':=[x,k-2]$.
We will break the argument into two main cases.  

\noindent Case 1: $a_{k-2}$ and $a_{k-1}$ point in the same direction.

By definition, $w^{(k)}=\iota(w^{(k-1)})$.  Then if $x\leq k-2$, it follows that
\begin{flalign*}
(I',J)&=([x,k-2],[y,\ell])\\
&=([w^{k-1}(i),k-2],[w^{k-1}(j),\ell])
\end{flalign*}
and so 
\begin{equation}
\label{eqn:I'J}
(I',J)\in {\tt BoxStrands} \text{ if and only if } i<j.
\end{equation}

We have four possible 
subcases, based on the relative values of $x$ and~$y$.

\noindent Subcase i: $x<y=k-1$.

$(I,J)$ is of type (II), and hence $(I,J)\in {\tt ConditionStrands}$.  Furthermore, note that  \[(I',J)=([x,k-2],[k-1,\ell])\] 
is of type (I), and so in {\tt ConditionStrands}.  The intervals for $(I',J)$ and $(I,J)$ look like this:
\begin{center}
\begin{tikzpicture}[x=.75cm,y=.75cm]
\filldraw [color=black,fill=black,thick](-1,1)circle(.1);
\filldraw [color=black,fill=black,thick](2,1)circle(.1);
\filldraw [color=black,fill=black,thick](3,0)circle(.1);
\filldraw [color=black,fill=black,thick](6,0)circle(.1);
\draw[dashed] (-1,1)--(2,1);
\draw[dashed] (3,0)--(6,0);
\node[] at (-1,1.5) {$x$};
\node[] at (2,1.5){$k-2$};
\node[] at (3,-.5) {$k-1$};
\node[] at (6,-.5) {$\ell$};
\filldraw [color=black,fill=black,thick](9,1)circle(.1);
\filldraw [color=black,fill=black,thick](12,1)circle(.1);
\filldraw [color=black,fill=black,thick](13,1)circle(.1);
\filldraw [color=black,fill=black,thick](13,0)circle(.1);
\filldraw [color=black,fill=black,thick](14,0)circle(.1);
\filldraw [color=black,fill=black,thick](16,0)circle(.1);
\draw[dashed] (9,1)--(13,1);
\draw[dashed] (13,0)--(16,0);
\node[] at (9,1.5) {$x$};
\node[] at (13,1.5){$k-1$};
\node[] at (13,-.5) {$k-1$};
\node[] at (16,-.5) {$\ell$};
\draw[ decoration={markings, mark=at position 0.6 with {\arrow[scale=2]{>}}}, postaction={decorate}](12,1) -- (13,1);
\draw[ decoration={markings, mark=at position 0.6 with {\arrow[scale=2]{>}}}, postaction={decorate}](13,0) -- (14,0);
\end{tikzpicture}.
\end{center}
 By the inductive hypothesis (\ref{eqn:inductionhyp}), $(I',J)\in {\tt BoxStrands}$. By (\ref{eqn:I'J}), $i<j$.  Therefore, by (\ref{eqn:boxiff}), $(I,J)\in{\tt BoxStrands}$.

Therefore, $(I,J)$ is in both ${\tt ConditionStrands}$ and ${\tt BoxStrands}$.

\noindent Subcase ii: $x<y<k-1$.

\begin{flalign*}
(I,J) \in {\tt BoxStrands} & \iff  i<j \quad \text{by (\ref{eqn:boxiff})}\\
&\iff (I',J)\in{\tt BoxStrands} \text{ by (\ref{eqn:I'J})}\\
& \iff (I',J) \in {\tt ConditionStrands} \text{ by (\ref{eqn:inductionhyp})}\\
& \iff a_{x-1} \text{ points in the same direction as } a_{k-2}\\
& \iff a_{x-1} \text{ points in the same direction as } a_{k-1}\\
& \iff (I,J)\in {\tt ConditionStrands}.
\end{flalign*}
The following picture depicts $(I',J)$ and $(I,J)$ respectively when $(I',J)$ and $(I,J)$ are in ${\tt ConditionStrands}$.
\begin{center}
\begin{tikzpicture}[x=.75cm,y=.75cm]
\filldraw [color=black,fill=black,thick](-1,1)circle(.1);
\filldraw [color=black,fill=black,thick](3,1)circle(.1);
\filldraw [color=black,fill=black,thick](2,0)circle(.1);
\filldraw [color=black,fill=black,thick](6,0)circle(.1);
\draw[dashed] (-1,1)--(3,1);
\draw[dashed] (2,0)--(6,0);
\node[] at (2,-.5) {$y$};
\node[] at (6,-.5) {$\ell$};
\node[] at (-1,1.5) {$x$};
\node[] at (3,1.5){$k-2$};
\filldraw [color=black,fill=black,thick](2,1)circle(.1);
\filldraw [color=black,fill=black,thick](1,1)circle(.1);
\filldraw [color=black,fill=black,thick](3,0)circle(.1);
\filldraw [color=black,fill=black,thick](4,0)circle(.1);
\draw[ decoration={markings, mark=at position 0.6 with {\arrow[scale=2]{>}}}, postaction={decorate}](1,1) -- (2,1);
\draw[ decoration={markings, mark=at position 0.6 with {\arrow[scale=2]{>}}}, postaction={decorate}](3,0) -- (4,0);

\filldraw [color=black,fill=black,thick](9,1)circle(.1);
\filldraw [color=black,fill=black,thick](13,1)circle(.1);
\filldraw [color=black,fill=black,thick](12,0)circle(.1);
\filldraw [color=black,fill=black,thick](16,0)circle(.1);
\draw[dashed] (9,1)--(13,1);
\draw[dashed] (12,0)--(16,0);
\node[] at (12,-.5) {$y$};
\node[] at (16,-.5) {$\ell$};
\node[] at (9,1.5) {$x$};
\node[] at (14,1.5){$k-1$};

\filldraw [color=black,fill=black,thick](14,1)circle(.1);
\filldraw [color=black,fill=black,thick](12,1)circle(.1);
\filldraw [color=black,fill=black,thick](11,1)circle(.1);
\filldraw [color=black,fill=black,thick](13,0)circle(.1);
\filldraw [color=black,fill=black,thick](14,0)circle(.1);
\filldraw [color=black,fill=black,thick](15,0)circle(.1);
\draw[ decoration={markings, mark=at position 0.6 with {\arrow[scale=2]{>}}}, postaction={decorate}](11,1) -- (12,1);
\draw[ decoration={markings, mark=at position 0.6 with {\arrow[scale=2]{>}}}, postaction={decorate}](13,0) -- (14,0);
\draw[ decoration={markings, mark=at position 0.6 with {\arrow[scale=2]{>}}}, postaction={decorate}](13,1) -- (14,1);
\draw[ decoration={markings, mark=at position 0.6 with {\arrow[scale=2]{>}}}, postaction={decorate}](14,0) -- (15,0);
\end{tikzpicture}
\end{center}

\noindent Subcase iii: $y<x=k-1$.

Pictured below are the intervals $I$ and $J$.
\begin{center}
\begin{tikzpicture}[x=.75cm,y=.75cm]
\filldraw [color=black,fill=black,thick](2,1)circle(.1);
\filldraw [color=black,fill=black,thick](-1,0)circle(.1);
\filldraw [color=black,fill=black,thick](2,0)circle(.1);
\filldraw [color=black,fill=black,thick](3,0)circle(.1);
\filldraw [color=black,fill=black,thick](1,0)circle(.1);
\filldraw [color=black,fill=black,thick](5,0)circle(.1);
\draw[dashed] (-1,0)--(5,0);

\node[] at (-1,-.5) {$y$};
\node[] at (5,-.5) {$\ell $};
\node[] at (2,1.5) {$x$};
\draw[ decoration={markings, mark=at position 0.6 with {\arrow[scale=2]{>}}}, postaction={decorate}](1,0) -- (2,0);
\draw[ decoration={markings, mark=at position 0.6 with {\arrow[scale=2]{>}}}, postaction={decorate}](2,0) -- (3,0);
\end{tikzpicture}
\end{center}
Since $y<x$ and this case assumes $a_{k-2}$ and $a_{k-1}$ point in the same direction, 
$(I,J)$ cannot be of type (III) and is not in {\tt ConditionStrands}.  Since 
\[w^{(k)}=\iota w^{(k-1)} \text{\ and $w^{(k-1)}(k-1)=k-1$,}\] 
it follows that $i=k-1$.  Since 
\[y=w^{(k)}(j)=w^{(k-1)}(j)<k-1,\] 
it follows that $i>j$, and so by (\ref{eqn:boxiff})
\[(I,J)\not \in {\tt BoxStrands}.\]
Therefore, $(I,J)$ is in neither ${\tt ConditionStrands}$ nor ${\tt BoxStrands}$.

\noindent Subcase iv: $y<x<k-1$.

\begin{flalign*}
(I,J) \in {\tt BoxStrands} & \iff  i<j \quad \text{by (\ref{eqn:boxiff})}\\
&\iff (I',J)\in{\tt BoxStrands} \text{ by (\ref{eqn:I'J})}\\
& \iff (I',J) \in {\tt ConditionStrands} \text{ by (\ref{eqn:inductionhyp})}\\
& \iff a_{x-1} \text{ points in the opposite direction as } a_{k-2}\\
& \iff a_{x-1} \text{ points in the opposite direction as } a_{k-1}\\
& \iff (I,J)\in {\tt ConditionStrands}.
\end{flalign*}
Below are $(I'J)$ and $(I,J)$ respectively, in the case $(I',J),(I,J)\in {\tt ConditionStrands}$.
\begin{center}
\begin{tikzpicture}[x=.75cm,y=.75cm]
\filldraw [color=black,fill=black,thick](1,1)circle(.1);
\filldraw [color=black,fill=black,thick](3,1)circle(.1);
\filldraw [color=black,fill=black,thick](-1,0)circle(.1);
\filldraw [color=black,fill=black,thick](1,0)circle(.1);
\filldraw [color=black,fill=black,thick](4,0)circle(.1);
\filldraw [color=black,fill=black,thick](3,0)circle(.1);
\filldraw [color=black,fill=black,thick](0,0)circle(.1);
\filldraw [color=black,fill=black,thick](6,0)circle(.1);
\draw[dashed] (-1,0)--(6,0);
\draw[dashed] (1,1)--(3,1);
\node[] at (-1,-.5) {$y$};
\node[] at (6,-.5) {$\ell$};
\node[] at (1,1.5) {$x$};
\node[] at (3.5,1.5){$k-2$};
\draw[ decoration={markings, mark=at position 0.6 with {\arrow[scale=2]{<}}}, postaction={decorate}](0,0) -- (1,0);
\draw[ decoration={markings, mark=at position 0.6 with {\arrow[scale=2]{>}}}, postaction={decorate}](3,0) -- (4,0);

\filldraw [color=black,fill=black,thick](11,1)circle(.1);
\filldraw [color=black,fill=black,thick](13,1)circle(.1);
\filldraw [color=black,fill=black,thick](14,1)circle(.1);
\filldraw [color=black,fill=black,thick](9,0)circle(.1);
\filldraw [color=black,fill=black,thick](11,0)circle(.1);
\filldraw [color=black,fill=black,thick](15,0)circle(.1);
\filldraw [color=black,fill=black,thick](14,0)circle(.1);
\filldraw [color=black,fill=black,thick](13,0)circle(.1);
\filldraw [color=black,fill=black,thick](10,0)circle(.1);
\filldraw [color=black,fill=black,thick](16,0)circle(.1);
\draw[dashed] (9,0)--(16,0);
\draw[dashed] (11,1)--(13,1);
\node[] at (9,-.5) {$y$};
\node[] at (16,-.5) {$\ell$};
\node[] at (11,1.5) {$x$};
\node[] at (14,1.5){$k-1$};
\draw[ decoration={markings, mark=at position 0.6 with {\arrow[scale=2]{<}}}, postaction={decorate}](10,0) -- (11,0);
\draw[ decoration={markings, mark=at position 0.6 with {\arrow[scale=2]{>}}}, postaction={decorate}](13,0) -- (14,0);
\draw[ decoration={markings, mark=at position 0.6 with {\arrow[scale=2]{>}}}, postaction={decorate}](14,0) -- (15,0);
\draw[ decoration={markings, mark=at position 0.6 with {\arrow[scale=2]{>}}}, postaction={decorate}](13,1) -- (14,1);
\end{tikzpicture}
\end{center}

\noindent Case 2: $a_{k-2}$ and $a_{k-1}$ point in opposite directions.

By definition, 
\[w^{(k)}=\iota( w^{(k-1)} w^{(k-1)}_0).\]    
 If $x\leq k-2$, and $y\leq k-1$ it follows that
\begin{flalign*}
(I',J)&=([x,k-2],[y,\ell])\\
&=([w^{(k-1)}(k-i),k-2],[w^{(k-1)}(k-j),\ell])
\end{flalign*}
and so 
\begin{equation}
\label{eqn:I'J2}
(I',J)\in {\tt BoxStrands} \text{ if and only if } k-i<k-j \text{ if and only if } i>j.
\end{equation}

\noindent Subcase i: $x<y=k-1$.

\begin{center}
\begin{tikzpicture}[x=.75cm,y=.75cm]
\filldraw [color=black,fill=black,thick](-1,1)circle(.1);
\filldraw [color=black,fill=black,thick](2,1)circle(.1);
\filldraw [color=black,fill=black,thick](3,0)circle(.1);
\filldraw [color=black,fill=black,thick](6,0)circle(.1);
\draw[dashed] (-1,1)--(2,1);
\draw[dashed] (3,0)--(6,0);
\node[] at (-1,1.5) {$x$};
\node[] at (2,1.5){$k-2$};
\node[] at (3,-.5) {$k-1$};
\node[] at (6,-.5) {$\ell$};
\filldraw [color=black,fill=black,thick](9,1)circle(.1);
\filldraw [color=black,fill=black,thick](12,1)circle(.1);
\filldraw [color=black,fill=black,thick](13,1)circle(.1);
\filldraw [color=black,fill=black,thick](13,0)circle(.1);
\filldraw [color=black,fill=black,thick](14,0)circle(.1);
\filldraw [color=black,fill=black,thick](16,0)circle(.1);
\draw[dashed] (9,1)--(13,1);
\draw[dashed] (13,0)--(16,0);
\node[] at (9,1.5) {$x$};
\node[] at (13,1.5){$k-1$};
\node[] at (13,-.5) {$k-1$};
\node[] at (16,-.5) {$\ell$};
\draw[ decoration={markings, mark=at position 0.6 with {\arrow[scale=2]{>}}}, postaction={decorate}](12,1) -- (13,1);
\draw[ decoration={markings, mark=at position 0.6 with {\arrow[scale=2]{<}}}, postaction={decorate}](13,0) -- (14,0);
\end{tikzpicture}
\end{center}
Since $a_{k-2}$ and $a_{k-1}$ point in opposite directions, $(I,J)\not \in {\tt ConditionStrands}$. The assumption $y=k-1$ implies $(I',J)\in {\tt ConditionStrands}$.   By  (\ref{eqn:inductionhyp}) $(I',J)\in{\tt BoxStrands}$. Since $x,y<k$, we have  \[x=w^{(k)}(i)=w^{(k-1)}(k-i) \text{\ and $y=w^{(k)}(j)=w^{(k-1)}(k-j)$.}\] 
Then $k-i<k-j$, so $i>j$ and $(I,J)\not \in {\tt BoxStrands}$, by (\ref{eqn:boxiff}).

Hence $(I,J)$ is neither in ${\tt ConditionStrands}$ nor ${\tt BoxStrands}$.

\noindent Subcase ii: $x<y<k-1$.
\begin{flalign*}
(I,J) \in {\tt BoxStrands} & \iff  i<j \text{ by (\ref{eqn:boxiff})}\\
&\iff (I',J)\not\in{\tt BoxStrands} \text{ by (\ref{eqn:I'J2})}\\
& \iff (I',J) \not \in {\tt ConditionStrands} \text{ by (\ref{eqn:inductionhyp})}\\
& \iff a_{y-1} \text{ points in the opposite direction as } a_{k-2}\\
& \iff a_{y-1} \text{ points in the same direction as } a_{k-1}\\
& \iff (I,J)\in {\tt ConditionStrands}.
\end{flalign*}
Below, we have $(I',J)\not\in {\tt ConditionStrands}$ and $(I,J)\in {\tt ConditionStrands}$.
\begin{center}
\begin{tikzpicture}[x=.75cm,y=.75cm]
\filldraw [color=black,fill=black,thick](-1,1)circle(.1);
\filldraw [color=black,fill=black,thick](3,1)circle(.1);
\filldraw [color=black,fill=black,thick](2,0)circle(.1);
\filldraw [color=black,fill=black,thick](6,0)circle(.1);
\draw[dashed] (-1,1)--(3,1);
\draw[dashed] (2,0)--(6,0);
\node[] at (2,-.5) {$y$};
\node[] at (6,-.5) {$\ell$};
\node[] at (-1,1.5) {$x$};
\node[] at (3,1.5){$k-2$};
\filldraw [color=black,fill=black,thick](2,1)circle(.1);
\filldraw [color=black,fill=black,thick](1,1)circle(.1);
\filldraw [color=black,fill=black,thick](3,0)circle(.1);
\filldraw [color=black,fill=black,thick](4,0)circle(.1);
\draw[ decoration={markings, mark=at position 0.6 with {\arrow[scale=2]{<}}}, postaction={decorate}](1,1) -- (2,1);
\draw[ decoration={markings, mark=at position 0.6 with {\arrow[scale=2]{>}}}, postaction={decorate}](3,0) -- (4,0);

\filldraw [color=black,fill=black,thick](9,1)circle(.1);
\filldraw [color=black,fill=black,thick](13,1)circle(.1);
\filldraw [color=black,fill=black,thick](12,0)circle(.1);
\filldraw [color=black,fill=black,thick](16,0)circle(.1);
\draw[dashed] (9,1)--(13,1);
\draw[dashed] (12,0)--(16,0);
\node[] at (12,-.5) {$y$};
\node[] at (16,-.5) {$\ell$};
\node[] at (9,1.5) {$x$};
\node[] at (14,1.5){$k-1$};

\filldraw [color=black,fill=black,thick](14,1)circle(.1);
\filldraw [color=black,fill=black,thick](12,1)circle(.1);
\filldraw [color=black,fill=black,thick](11,1)circle(.1);
\filldraw [color=black,fill=black,thick](13,0)circle(.1);
\filldraw [color=black,fill=black,thick](14,0)circle(.1);
\filldraw [color=black,fill=black,thick](15,0)circle(.1);
\draw[ decoration={markings, mark=at position 0.6 with {\arrow[scale=2]{<}}}, postaction={decorate}](11,1) -- (12,1);
\draw[ decoration={markings, mark=at position 0.6 with {\arrow[scale=2]{>}}}, postaction={decorate}](13,0) -- (14,0);
\draw[ decoration={markings, mark=at position 0.6 with {\arrow[scale=2]{>}}}, postaction={decorate}](13,1) -- (14,1);
\draw[ decoration={markings, mark=at position 0.6 with {\arrow[scale=2]{<}}}, postaction={decorate}](14,0) -- (15,0);
\end{tikzpicture}
\end{center}

\noindent Subcase iii: $y<x=k-1$. Here $(I,J)$ looks like:
\begin{center}
\begin{tikzpicture}[x=.75cm,y=.75cm]
\filldraw [color=black,fill=black,thick](2,1)circle(.1);
\filldraw [color=black,fill=black,thick](-1,0)circle(.1);
\filldraw [color=black,fill=black,thick](2,0)circle(.1);
\filldraw [color=black,fill=black,thick](3,0)circle(.1);
\filldraw [color=black,fill=black,thick](1,0)circle(.1);
\filldraw [color=black,fill=black,thick](5,0)circle(.1);
\draw[dashed] (-1,0)--(5,0);

\node[] at (-1,-.5) {$y$};
\node[] at (5,-.5) {$\ell $};
\node[] at (2,1.5) {$x$};
\draw[ decoration={markings, mark=at position 0.6 with {\arrow[scale=2]{>}}}, postaction={decorate}](1,0) -- (2,0);
\draw[ decoration={markings, mark=at position 0.6 with {\arrow[scale=2]{<}}}, postaction={decorate}](2,0) -- (3,0);
\end{tikzpicture}
\end{center}
Since Case~2 assumes $a_{k-2}$ and $a_{k-1}$ point in opposite directions, $(I,J)$ is type (II) and so in ${\tt ConditionStrands}$.  Now,
\[k-1=x=w^{(k)}(i)=w^{(k-1)}(k-i)\] 
which implies $i=1$.  Then $j>i$, so $(I,J)\in {\tt BoxStrands}$.
So $(I,J)$ is both in ${\tt ConditionStrands}$ and ${\tt BoxStrands}$.

\noindent Subcase iv: $y<x<k-1$.
\begin{align*}
(I,J) \in {\tt BoxStrands} & \iff  i<j \text{ by (\ref{eqn:boxiff})}\\
&\iff (I',J)\not\in{\tt BoxStrands} \text{ by (\ref{eqn:I'J2})}\\
& \iff (I',J) \not \in {\tt ConditionStrands} \text{ by (\ref{eqn:inductionhyp})}\\
& \iff a_{x-1} \text{ points in the same direction as } a_{k-2}\\
& \iff a_{x-1} \text{ points the opposite direction as } a_{k-1}\\
& \iff (I,J)\in {\tt ConditionStrands}. 
\end{align*} 
Pictured below are $(I',J)$ and $(I,J)$, in the case that $(I',J)\not\in {\tt ConditionStrands}$ and $(I,J)\in {\tt ConditionStrands}$.
\begin{center}
\begin{tikzpicture}[x=.75cm,y=.75cm]
\filldraw [color=black,fill=black,thick](1,1)circle(.1);
\filldraw [color=black,fill=black,thick](3,1)circle(.1);
\filldraw [color=black,fill=black,thick](-1,0)circle(.1);
\filldraw [color=black,fill=black,thick](1,0)circle(.1);
\filldraw [color=black,fill=black,thick](4,0)circle(.1);
\filldraw [color=black,fill=black,thick](3,0)circle(.1);
\filldraw [color=black,fill=black,thick](0,0)circle(.1);
\filldraw [color=black,fill=black,thick](6,0)circle(.1);
\draw[dashed] (-1,0)--(6,0);
\draw[dashed] (1,1)--(3,1);
\node[] at (-1,-.5) {$y$};
\node[] at (6,-.5) {$\ell$};
\node[] at (1,1.5) {$x$};
\node[] at (3.5,1.5){$k-2$};
\draw[ decoration={markings, mark=at position 0.6 with {\arrow[scale=2]{>}}}, postaction={decorate}](0,0) -- (1,0);
\draw[ decoration={markings, mark=at position 0.6 with {\arrow[scale=2]{>}}}, postaction={decorate}](3,0) -- (4,0);

\filldraw [color=black,fill=black,thick](11,1)circle(.1);
\filldraw [color=black,fill=black,thick](13,1)circle(.1);
\filldraw [color=black,fill=black,thick](14,1)circle(.1);
\filldraw [color=black,fill=black,thick](9,0)circle(.1);
\filldraw [color=black,fill=black,thick](11,0)circle(.1);
\filldraw [color=black,fill=black,thick](15,0)circle(.1);
\filldraw [color=black,fill=black,thick](14,0)circle(.1);
\filldraw [color=black,fill=black,thick](13,0)circle(.1);
\filldraw [color=black,fill=black,thick](10,0)circle(.1);
\filldraw [color=black,fill=black,thick](16,0)circle(.1);
\draw[dashed] (9,0)--(16,0);
\draw[dashed] (11,1)--(13,1);
\node[] at (9,-.5) {$y$};
\node[] at (16,-.5) {$\ell$};
\node[] at (11,1.5) {$x$};
\node[] at (14,1.5){$k-1$};
\draw[ decoration={markings, mark=at position 0.6 with {\arrow[scale=2]{>}}}, postaction={decorate}](10,0) -- (11,0);
\draw[ decoration={markings, mark=at position 0.6 with {\arrow[scale=2]{>}}}, postaction={decorate}](13,0) -- (14,0);
\draw[ decoration={markings, mark=at position 0.6 with {\arrow[scale=2]{<}}}, postaction={decorate}](14,0) -- (15,0);
\draw[ decoration={markings, mark=at position 0.6 with {\arrow[scale=2]{>}}}, postaction={decorate}](13,1) -- (14,1);
\end{tikzpicture}
\end{center}

\end{proof}

Theorem~\ref{prop:codim} now 
follows by combining Propositions~\ref{prop:first1}
and~\ref{prop:second1} with Lemma~\ref{lemma:bigfinale}.
\end{proof}

\section*{Acknowledgements}
We thank Bruce Berndt and Ae Ja Yee for historical references concerning Durfee square identities.  AW and AY were supported by a UIUC Campus Research Board and by an NSF grant.

\bibliographystyle{mybst}
\bibliography{mylib}

\end{document}